%
%
%
\documentclass{amsart}
\usepackage[utf8]{inputenc}
  \usepackage{amsmath}
\usepackage{enumerate}
  \usepackage{amssymb}
  \usepackage{amsthm}
  \usepackage{appendix}
  \usepackage{bm}
  \usepackage{graphicx}
  \usepackage{color}
\parskip=1.2\parskip
\usepackage{epsfig}
\usepackage{amsmath,amssymb}
\pagestyle{myheadings}
\hoffset - 0.5 in
\textheight=7.2in

\newtheorem{Th}{Theorem}[section]
\newtheorem{lem}[Th]{Lemma}

\theoremstyle{definition}

\newtheorem{Prop}[Th]{Proposition}

\theoremstyle{remark}
\newtheorem*{rem}{\bf Remark}
\newtheorem*{que}{\bf Questions}
\newtheorem*{Conj}{\bf Conjecture}
\newtheorem*{thank}{\ \ \ \bf Acknowledgment}

\numberwithin{equation}{section}


\newcommand{\tend}[3][]{\xrightarrow[#2\to#3]{#1}}

\newcommand{\egdef}{\stackrel{\textrm {def}}{=}}
\newcommand{\ds}{\displaystyle}

\newcommand{\Z}{\mathbb{Z}}

\newcommand{\N}{\mathbb{N}}
\newcommand{\T}{\mathbb{T}}

\newcommand*{\QEDA}{\hfill\ensuremath{\blacksquare}}


\title[The Rudin-Shapiro polynomials \& the Fekete polynomials]{The Rudin-Shapiro polynomials and The Fekete polynomials are not $L^\alpha$-flat}

\author{el Houcein el Abdalaoui}
\address{Normandie University, University of Rouen
  Department of Mathematics, LMRS  UMR 60 85 CNRS\\
Avenue de l'Universit\'e, BP.12
76801 Saint Etienne du Rouvray - France .}
\email{elhoucein.elabdalaoui@univ-rouen.fr}

\urladdr{http://www.univ-rouen.fr/LMRS/Persopage/Elabdalaoui/}

\date{\today}
\subjclass[2010]{Primary 42A05, 42A55, Secondary 37A05, 37A30}
\dedicatory{}

\keywords{Simple Lebesgue spectrum, Banach's problem, Rokhlin's problem, weak Rokhlin's problem,  merit factor, flat polynomials, ultraflat polynomials, Littlewood's problem, Golay-Rudin-Shapiro sequence, Rudin-Shapiro polynomials, Fekete polynomials, Lipschitz functions, Fourier series, substitution, Barker sequence, digital transmission.
$$
\includegraphics[scale=0.15]{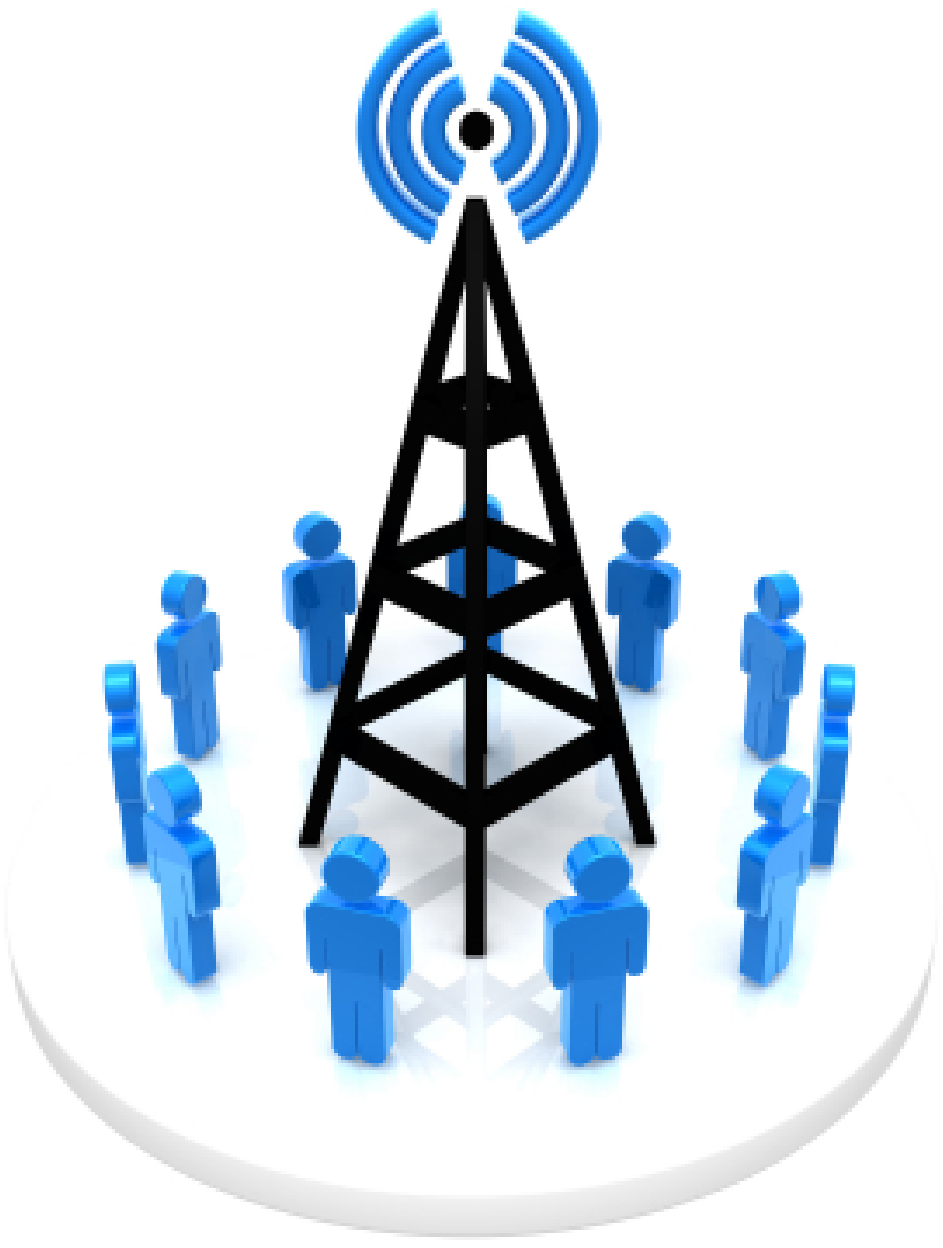}
$$
}
\begin{document}
\maketitle
\begin{abstract} We establish that the Rudin-Shapiro polynomials are not $L^\alpha$-flat, for any $\alpha \geq 0$. We further prove that the ``truncated'' Rudin-Shapiro sequence cannot generate a sequence of $L^\alpha$-flat polynomials, for any $\alpha \geq 0$. In the appendix, we present a simple proof of the fact that the Fekete polynomials and the modified or shifted Fekete polynomials  are not $L^\alpha$-flat, for any $\alpha \geq 0$.

\end{abstract}

\section{Introduction}\label{intro}
  The purpose of this note is to establish that the Rudin-Shapiro polynomials are not $L^\alpha$-flat, for any $\alpha \geq 0$. This answers a question raised by B. Weiss \cite{Weiss}. We thus obtain that the equivalence of the $L^\alpha$-norms does not imply $L^\alpha$-flatness. We further provide a simple and direct proof that the Fekete polynomials and the modified or shifted Fekete polynomials are not $L^\alpha$-flat, for any $\alpha \geq 0$. \\


The Rudin-Shapiro polynomials were introduced independently by H. Shapiro \cite{Shapiro} and W. Rudin \cite{Rudin}. In the beginning, H. Shapiro introduced  them in his 1951's Master thesis \cite[p.39]{Shapiro}. Precisely, he produced them in the study of two extremal problems related to the well-known coefficient  problem of Landau \cite[p.139]{Duren}. Later, by appealing essentially to the same methods, W. Rudin produced those polynomials in \cite{Rudin} with the consent of H. Shapiro. Therein, the author answered positively a question posed to him by Salem. Since then, the Rudin-Shapiro polynomials are the powerful tool  for building examples or counterexamples in many settings. Among the most known examples is the one in which the Rudin-Shapiro polynomials are used to establish the sharpness in Bernstein's theorem \cite[Vol I, p.240]{Zygmund} which says that the set of Lipschitz functions $\textrm{Lip}(\alpha)$ is contained in the space of functions with an absolutely convergent Fourier series provided that  $\alpha>1/2$.\\

The Rudin-Shapiro polynomials are also connected to the spectral theory of dynamical systems. This discovery was made by T. Kaeme \cite{Ka} and M. Queffelec \cite{Q}. Indeed, they proved that the Rudin-Shapiro polynomials arise in the calculus of the spectral type of a certain substitution called nowadays the Rudin-Shapiro substitution. Their investigation yields that the spectrum of the Rudin-Shapiro substitution has a Lebesgue component of multiplicity two. Before, J. Mathew and M. G. Nadkarni constructed in \cite{MN} a transformation with a Lebesgue component of multiplicity two. Of course, this gives a positive answer to the weak Rokhlin problem. Nevertheless, it seems that Rokhlin's problem on finding an ergodic measure preserving
transformation on a finite measure space whose spectrum is Lebesgue type with finite multiplicity is still unsolved \cite[p.219]{Rokhlin}. This problem is related to the well-known Banach's problem whether a transformation measure-preserving
 may have a simple Lebesgue spectrum \cite[p.76]{Ulam}. For the Banach's problem, the transformation may act on the infinite measure space and it is supposed to be ergodic and non-dissipative.\\

  The  connection between Banach's problem and some extremal problem for analytic trigonometric polynomials was established by J. Bourgain in \cite{Bourgain}. Therein, he proved that the singularity of the spectrum of the rank one maps is related to the $L^1$-flatness of the analytic trigonometric polynomials with coefficients in $\{0,1\}$. Later, M. Guenais \cite{Guenais} and Downarowicz-Lacroix \cite{Down} proved that the weak Rokhlin's problem is connected to the $L^1$-flatness in the class of analytic trigonometric polynomials with coefficients in $\{+1,-1\}$. Precisely, M. Guenais proved that there exist a dynamical system with simple Lebesgue component if and only if there exist a sequence of analytic trigonometric polynomials $P_n, n=1,2,\cdots$ whose absolute values $\mid P_n\mid, n=1,2,\cdots$  converge to 1 in $L^1$, and such that for each $n$, the coefficients of $P_n$ are $\pm 1$. Downarowicz and Y. Lacroix established that the weak Rokhlin's problem has a positive answer provided that there exist the so-called Barker sequences of arbitrary high length.\\

Recently, E. H. el Abdalaoui and M. Nadkarni proved that the weak Rokhlin's problem has an affirmative answer
in the class of nonsingular transformations of a Lebesgue probability space \cite{AbN1}. Therein, the authors produced a sequence of ultraflat polynomials $P_n, n=1,2,\cdots$ with real coefficients, where for each $n$ the coefficients of $P_n, n=1,2,\cdots$ are not equal in absolute value, but which can be used to construct non-singular ergodic maps with the desired spectral property. We also mention that Guenais has constructed a concrete ergodic measure preserving group action whose unitary group admits a Haar component of multiplicity one in its spectrum. Guenais's construction is done by appealing to the ultraflat Fekete polynomials defined on some countable discrete torsion Abelian group \cite{elabdal-lem}.\\

Very recently, the author in \cite{Ab} settled the Banach Scottish problem by proving that there exists a rank one map acting on an infinite measure space with simple Lebesgue spectrum. Therein, the author established that there exist $L^1$-flat polynomials from the class $B$ or the so-called Newman polynomials. The construction of such polynomials is based on Singer's construction and on the nice combinatoric properties of Sidon sets and Singer sets. The proof is accomplished as a consequence of the sharper results from $H^p$ theory and the Carleson interpolation theory. Consequently, the author obtained a positive answer to the problems attributed to Littlewood and Mahler. For more details,  we refer to \cite{Ab}.\\

Here, in connection with the Littlewood problem on the flatness of the polynomials with coefficients $\pm 1$, we shall prove that the Rudin-Shapiro polynomials are not $L^\alpha$-flat for $\alpha \geq 0$. This result supports the conjecture mentioned by D. J. Newman in \cite{Newman} which says that all the analytic trigonometric polynomials $P$ with coefficients $\pm 1$ satisfy
\begin{eqnarray*}
\big\|P\big\|_1 <c \big\|P\big\|_2,
\end{eqnarray*}
for some positive constant $c<1$.\\

According to M. Guenais's result \cite{Guenais} and Downarowicz-Lacroix contribution \cite{Down}, the previous conjecture can be translated in the language of dynamical systems and the language of coding and information theory respectively as follows:
\begin{Conj}[On the Morse cocyle]
Morse cocycles with simple Lebesgue component do not exist.
\end{Conj}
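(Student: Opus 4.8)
The plan is to reduce this dynamical statement to a purely analytic one and then to the flatness problem for $\pm1$ polynomials. First I would invoke the equivalence of M. Guenais together with the Downarowicz--Lacroix reformulation recalled above: a Morse cocycle with a simple Lebesgue component exists if and only if there is a sequence of analytic trigonometric polynomials $P_n$ with coefficients in $\{+1,-1\}$ whose moduli, after normalization by $\big\|P_n\big\|_2$, converge to $1$ in $L^1$; that is, a sequence of $L^1$-flat $\pm1$ polynomials. The conjecture is therefore \emph{equivalent} to the assertion that no such flat sequence exists, and to prove it one would establish Newman's inequality in its strong form: there is an absolute constant $c<1$ with $\big\|P\big\|_1\le c\,\big\|P\big\|_2$ for every analytic trigonometric polynomial $P$ with $\pm1$ coefficients, uniformly in the degree.

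To manufacture such a uniform gap I would pass to the normalized polynomial $Q=P/\big\|P\big\|_2$, so that $\big\|Q\big\|_2=1$, and exploit the identity $1-\big\|Q\big\|_1=\tfrac12\,\big\|\,|Q|-1\,\big\|_2^2$, which shows that flatness fails precisely when the modulus $|Q|$ stays bounded away from the constant function $1$ in $L^2$. The natural device for controlling $|Q|-1$ is the fourth moment, since $\int(|Q|^2-1)^2=\big\|Q\big\|_4^4-1$; and the merit-factor circle of ideas suggests an estimate of the form $\big\|P\big\|_4^4\ge(1+\delta)\,\big\|P\big\|_2^4$ for some $\delta>0$, which would force the variance of $|Q|^2$ to be bounded below and hence prevent $|Q|$ from concentrating at $1$. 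This is exactly the mechanism that rules out the Rudin--Shapiro and Fekete families in the body of this paper, whose autocorrelation spectra keep $\big\|P\big\|_4$ strictly above $\big\|P\big\|_2$, and it is the template I would try to upgrade to the entire class.

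The hard part will be passing from any fourth-moment bound to a \emph{uniform} $L^1$--$L^2$ gap. The obstruction is that the factorization $|Q|^2-1=(|Q|-1)(|Q|+1)$ yields only
\[
\big\|\,|Q|-1\,\big\|_2^2 \;\geq\; \frac{\delta}{\big(1+\big\|Q\big\|_\infty\big)^2},
\]
and the sup-norm $\big\|Q\big\|_\infty$ may be of order $\sqrt{\deg P}$, so this lower bound degenerates and does not by itself forbid flatness: in principle the $L^4$ mass could be carried on a thin set of spikes while $|Q|$ is essentially $1$ elsewhere, which is consistent with $L^1$-flatness. Controlling this concentration---equivalently, the distribution function of $|P|$ over the circle, or the measure of the region where $|P|$ is large---is the genuine difficulty and is precisely the content of Littlewood's flatness problem. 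My expectation is that the argument can be closed for structured families, as here, by using their explicit recursions or their autocorrelation profiles to bound the measure of the high-spike set, but that a proof for \emph{all} $\pm1$ polynomials will require a new uniform estimate on the level sets of $|P|$ that the fourth-moment information alone cannot supply.
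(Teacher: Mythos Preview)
The statement you are attempting to prove is presented in the paper as an open \emph{conjecture}, not as a theorem; the paper gives no proof of it and does not claim one. What the paper does is exactly the reduction you describe in your first paragraph: via Guenais's equivalence, the conjecture is reformulated as the non-existence of an $L^1$-flat sequence of $\pm1$ polynomials, i.e.\ as the strong Newman inequality $\|P\|_1\le c\|P\|_2$ with an absolute $c<1$. The paper then proves only the special cases of Rudin--Shapiro and Fekete polynomials, precisely by the fourth-moment mechanism you outline, and explicitly leaves the general statement as a conjecture (indeed three equivalent conjectures: Newman's, the Morse-cocycle conjecture, and the Barker-sequence conjecture).

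Your analysis of why the argument does not close for the full class is accurate and is the honest state of affairs. The bound $\|P\|_4^4\ge(1+\delta)\|P\|_2^4$---even if it held uniformly, which is itself the open Newman--Byrnes conjecture recalled in the paper's final remark---controls $\big\||Q|^2-1\big\|_2$ but not $\big\||Q|-1\big\|_2$, and the gap between the two is governed by $\|Q\|_\infty$, which for generic $\pm1$ polynomials is of order $\sqrt{\deg P}$. Your diagnosis that the missing ingredient is a level-set or distribution-function estimate for $|P|$ that a moment bound alone cannot supply is exactly the obstruction; this is Littlewood's flatness problem, and it remains open. In short: there is no proof in the paper to compare against, your reduction matches the paper's, and the gap you flag is real and unresolved.
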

\begin{Conj}[On the Barker sequences]
Barker sequences of arbitrary length do not exist.
\end{Conj}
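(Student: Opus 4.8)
The plan is to reduce the Barker conjecture to the flatness circle of ideas that motivates this paper, via the equivalences of Guenais \cite{Guenais} and Downarowicz--Lacroix \cite{Down}. Recall that a Barker sequence of length $N$ is a word $(\epsilon_0,\ldots,\epsilon_{N-1})\in\{\pm1\}^N$ whose aperiodic autocorrelations $c_k=\sum_{j}\epsilon_j\epsilon_{j+k}$ satisfy $\abs{c_k}\le 1$ for every $k\ne 0$. First I would encode such a sequence by the analytic polynomial $P_N(z)=\sum_{j=0}^{N-1}\epsilon_j z^j$ on the circle and record the two elementary identities $\big\|P_N\big\|_2^2=N$ and
\[
\big\|P_N\big\|_4^4=\sum_{k}\abs{c_k}^2=N^2+2\sum_{k=1}^{N-1}c_k^2 .
\]
The Barker condition then gives $\big\|P_N\big\|_4^4\le N^2+2(N-1)$, so the normalised polynomials $Q_N=P_N/\sqrt N$ obey $\big\|Q_N\big\|_2=1$ and $\big\|Q_N\big\|_4\to 1$; that is, arbitrarily long Barker sequences would produce an $L^4$-flat sequence of $\pm1$ polynomials.

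The second step is to upgrade $L^4$-flatness to $L^1$-flatness. By the log-convexity of $p\mapsto\log\big\|Q_N\big\|_p$ (equivalently, by interpolation with exponent $\theta=2/3$ between $L^1$ and $L^4$) one has $\big\|Q_N\big\|_2\le\big\|Q_N\big\|_1^{1/3}\,\big\|Q_N\big\|_4^{2/3}$. Since the left-hand side equals $1$ and $\big\|Q_N\big\|_4\to1$, this forces $\big\|Q_N\big\|_1\to1$, while trivially $\big\|Q_N\big\|_1\le\big\|Q_N\big\|_2=1$. Hence the hypothetical Barker polynomials would be $L^1$-flat. Assembling the two steps, the existence of arbitrarily long Barker sequences is incompatible with the existence of a uniform constant $c<1$ for which $\big\|P\big\|_1<c\big\|P\big\|_2$ holds across all $\pm1$ polynomials, i.e. with Newman's conjecture; through Guenais's theorem it is likewise incompatible with the nonexistence of a Morse cocycle carrying a simple Lebesgue component.

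The main obstacle is exactly this last implication read in the forward direction: the reduction above is clean and elementary, but it converts the Barker problem into Newman's $L^1/L^2$ inequality, which is itself open. Moreover the $L^4$-flatness extracted from a Barker sequence is genuine --- a Barker word realises an essentially unbounded merit factor --- so there is no cheap $L^2$--$L^4$ obstruction to exploit; the entire difficulty is concentrated in excluding simultaneous $L^1$-flatness for this highly constrained family. The non-flatness results of the present paper, namely that the Rudin--Shapiro and Fekete polynomials fail to be $L^\alpha$-flat for every $\alpha\ge0$, are strong evidence in this direction, but they do not settle the Barker case: the Rudin--Shapiro and Fekete families are explicit and structurally rigid, whereas a Barker sequence is an a priori arbitrary $\pm1$ word singled out only by its autocorrelation bound, so a different, more robust lower bound on $\big\|Q_N\big\|_1$ --- valid for \emph{every} such word --- would be needed to close the argument.
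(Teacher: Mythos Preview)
The statement you were asked to prove is labeled a \emph{Conjecture} in the paper, and the paper offers no proof of it. It is introduced only as a reformulation of Newman's $L^1/L^2$ conjecture via the Downarowicz--Lacroix correspondence, and is explicitly left open; the main theorem of the paper (non-$L^\alpha$-flatness of the Rudin--Shapiro polynomials) is presented merely as \emph{supporting evidence} for Newman's conjecture, not as a step toward resolving the Barker question.

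Your write-up is entirely consistent with this. The first two steps --- the $L^4$-flatness computation from the autocorrelation bound and the interpolation upgrade to $L^1$-flatness --- are correct and standard, and they recover precisely the reduction that the paper alludes to (Barker $\Rightarrow$ $L^1$-flat Littlewood polynomials $\Rightarrow$ failure of Newman's conjecture). But as you yourself say in the final paragraph, this is a reduction to an open problem, not a proof: you would still need Newman's inequality $\big\|P\big\|_1<c\big\|P\big\|_2$ for some uniform $c<1$ over all $\pm1$ polynomials, and nothing in the paper supplies that. So there is no discrepancy to report --- neither you nor the paper proves the conjecture, and your proposal is best read as a correct discussion of why the conjecture is equivalent to (a special case of) Newman's, together with an honest acknowledgment that the argument stops there.
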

We further give a short and a direct proof of the fact that the Fekete polynomials and the modified or shifted Fekete polynomials are not $L^\alpha$-flat, for any $\alpha \geq 0.$\\

In the forthcoming paper \cite{elabdal-little}, the author establishes the one-to-one correspondence between the Littlewood polynomials and the idempotent polynomials. Obviously, this gives a one-to-one correspondence between the $L^2$-normalized Littlewood polynomials and the Bourgain-Newman polynomials. As a consequence the author proves a criterion of $L^1$-flatness for those two class of polynomials. As an application of this criterion, the author shows that if the frequency of the positives coefficients in the Littlewood polynomials $(P_q)$ is not in the interval $[\frac14,\frac34]$, then $(P_q)$ are not $L^\alpha$- flat, $\alpha \geq 0$.\\

The paper is organized as follows. In section 2, we review some basic tools and we state our main result. In section 3, we prove it. Finally, in the appendix, we present a simple proof of the fact that Fekete polynomials and shifted Fekete polynomials are not $L^\alpha$-flat, for any $\alpha \geq 0.$

\section{Basic definitions and tools}
Let $S^1$ denote the circle group and $dz$ the normalized Lebesgue measure on $S^1$. The Rudin-Shapiro polynomials are defined inductively as follows
\[
\left\{
  \begin{array}{ll}
    P_0=Q_0=1, & \hbox{} \\
    P_{n+1}(z)=P_n(z)+z^{2^n}Q_n(z), & \hbox{ and} \\
    Q_{n+1}(z)=P_n(z)-z^{2^n}Q_n(z), & \hbox{for $n\geq0$ and $z \in S^1$.}
  \end{array}
\right.
\]
It can be easily seen by induction that $P_n$ and $Q_n$ are analytic trigonometric polynomials of degree $2^n-1$ with coefficients $\pm 1$. Let $\{r_k\}_{k=0}^{2^n-1}$ and $\{r'_k\}_{k=0}^{2^n-1}$ be respectively the sequence
of coefficients of the polynomials $P_n$ and $Q_n$. We thus have
$$r'_k=\left\{
         \begin{array}{ll}
           r_k, & \hbox{if $k <2^{n-1}$;} \\
           -r_k, & \hbox{if not.}
         \end{array}
       \right.
$$
The sequence $r_k$, $k=0,\cdots,$ is nowadays called the Golay-Rudin-Shapiro sequence \footnote{This terminology is due to Brillhart \& Morton \cite{Brillart2}.}. Such a sequence verifies
\begin{eqnarray}\label{RSprop}
r_0&=&1,\nonumber\\
r_n&=&\left\{
      \begin{array}{ll}
        r_{n/2}, & \hbox{if $n$ is even;} \\
        (-1)^{[n/2]}r_{[n/2]}, & \hbox{if $n$ is odd,}
      \end{array}
    \right.
\end{eqnarray}
as customary, $[x]$ denotes the integer part of $x$. It has been shown by Brillart and Carlitz \cite{Brillart} that if
 $n=\sum_{j=0}^{N_n}\epsilon_j2^j$ is a dyadic representation of $n$, then
$$r_n=(-1)^{s(n)},$$
where $s(n)=\sum_{j=0}^{N_n-1}\epsilon_j \epsilon_{j+1}$. Therefore, one can define $r_n$ in a combinatoric manner by
considering $s(n)$ as the occurrence number of “11” in the dyadic representation of $n$. For that it suffices to see that
$(-1)^{s(n)}$ satisfies the Golay-Rudin-Shapiro properties \eqref{RSprop}.\\

The relationship between the Golay-Rudin-Shapiro sequence and the substitution has been established by G. Christol, T. Kamae, M.
Mend\`es France and G. Rauzy in \cite{CKMR}. Indeed, therein, the authors proved that $r_n$ is a $2$-automatic sequence, obtained from a primitive substitution $\xi$ of length two on a four-letters alphabet $\{0,1,2,3\}$ given by
\begin{eqnarray*}
\xi(0)&=&02\\
\xi(1)&=&32\\
\xi(2)&=&01\\
\xi(3)&=&31,
\end{eqnarray*}
with the fixed point $u=\xi^{\infty}(0)$ which generated the Rudin-Shapiro dynamical system. We further have $r_n=\pi(u_n)$, where $\pi~~:~~\{0,1,2,3\}\longrightarrow\{+1,-1\}$ projects $0,2$ onto $1$ and $0,3$ on $-1$. This can be seen by defining the following words on the alphabet $\{+1,-1\}$
$$
A_n=r_0 \cdots r_{2^n-1},
{\textrm{~~and~~}}  B_n= r_{2^n}\cdots r_{2^{n+1}-1}.$$
We define the Rudin-Shapiro conjugate transform on the word $\ds A \in \bigcup_{n \in \N}\big\{+1,-1\big\}^n$ by
$$\overline{A}(i)=\left\{
    \begin{array}{ll}
      +1, & \hbox{if $A(i)=-1$;} \\
      -1, & \hbox{if not.}
    \end{array}
  \right.
$$
We thus have
$$
A_{n+1}=A_nB_n,\\{\textrm{~~and~~}} B_{n+1}=A_n\overline{B_n}.$$

Subsequently, we construct the fixed point by coding $A$ as $0$, $B$ as $1$, the conjugate of $A$ as $2$ and the conjugate of
$B$ as $3$. For the construction of the stage $n+1$, the rule is given by $\xi$. \\

We end this section by recalling that a sequence $P_n(z), n=1,2,\cdots$ of analytic trigonometric polynomials of $L^2(S^1,dz)$
norm 1 is said to be $L^1$-flat if the sequence $| P_n(z)|, n=1,2,\cdots$ converges in $L^1$-norm to the constant function $1$ as $n\rightarrow \infty$. More generally, for $\alpha>0$ or $\alpha=+\infty$, we say that $(P_n)$ is $L^\alpha$-flat if $| P_n(z)|, n=1,2,\cdots$ converges in $L^\alpha$-norm to the constant function $1$, . For $\alpha=0$, we say that $(P_n)$ is
$L^\alpha$-flat, if the Mahler measures $M(P_n)$ converge to 1. We remind that the Mahler measure of a function $f \in L^1(S^1,dz)$ is defined by
$$ M(f)=\|f\|_0=\lim_{\beta \longrightarrow 0}\|f\|_{\beta}=\exp\Big(\int_{S^1} \log(|f(t)|) dt\Big).$$

In the following, we generalize the characterization of $L^1$-flatness obtained in \cite{Ab} to the case of $L^\alpha$-flatness for $0<\alpha \leq 2$.
\begin{Prop}[Characterization of $L^\alpha$-flatness, $0 <\alpha \leq 2.$]
Let $(P_n)$ be a sequence of $L^2$-normalized analytic trigonometric polynomials and $0<\alpha \leq 2.$ Then, the following assertions are equivalent
\begin{enumerate}[(i)]
  \item $(P_n)$ are $L^{\alpha/2}$-flat.
  \item $\ds \big\|P_n\big\|_{\alpha/2} \tend{n}{+\infty}1.$
  \item $\ds \big\|\big|P_n\big|^{\alpha}-1\big\|_{1}\tend{n}{+\infty}0.$
\end{enumerate}
\end{Prop}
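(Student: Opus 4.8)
The plan is to work on the probability space $(S^1,dz)$, set $f_n=|P_n|\ge 0$, and record that the $L^2$-normalization of $P_n$ reads $\int_{S^1}f_n^2\,dz=1$, while $\|P_n\|_p=\|f_n\|_p$ for every $p$. Since $0<\alpha/2\le 1\le 2$ and $dz$ is a probability measure, the map $p\mapsto\|f_n\|_p$ is nondecreasing, which hands us the free squeeze
\[
\|f_n\|_{\alpha/2}\ \le\ \|f_n\|_{1}\ \le\ \|f_n\|_{2}=1 .
\]
I would prove the equivalence by running the cycle (i)$\Rightarrow$(ii)$\Rightarrow$(iii)$\Rightarrow$(i), with the whole weight resting on a single reduction: assertion (ii) forces $f_n\to 1$ in $L^2(S^1,dz)$. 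Once that is in hand, the remaining implications are soft measure-theoretic upgrades.

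For (i)$\Rightarrow$(ii) I would invoke the subadditivity $|a+b|^{\alpha/2}\le|a|^{\alpha/2}+|b|^{\alpha/2}$, valid since $0<\alpha/2\le 1$. Applying it to $f_n=(f_n-1)+1$ and to $1=(1-f_n)+f_n$ and integrating yields $\big|\,\|f_n\|_{\alpha/2}^{\alpha/2}-1\,\big|\le\|f_n-1\|_{\alpha/2}^{\alpha/2}$, so $L^{\alpha/2}$-flatness drives $\|f_n\|_{\alpha/2}^{\alpha/2}\to 1$, hence (ii).

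The core step is (ii)$\Rightarrow f_n\to 1$ in $L^2$. Assertion (ii) says $\|f_n\|_{\alpha/2}\to 1$, and the squeeze above upgrades this at once to $\|f_n\|_1\to 1$. I would then use the exact identity
\[
\|f_n-1\|_2^2=\int_{S^1}f_n^2\,dz-2\int_{S^1}f_n\,dz+1=2\big(1-\|f_n\|_1\big),
\]
which forces $\|f_n-1\|_2\to 0$. From this everything drops out: (i) because $\|f_n-1\|_{\alpha/2}\le\|f_n-1\|_2\to 0$; and (iii) because $f_n\to 1$ in $L^2$ gives $|P_n|^\alpha=f_n^\alpha\to 1$ in measure, while $\int_{S^1}(f_n^\alpha)^{2/\alpha}\,dz=\int_{S^1}f_n^2\,dz=1$ keeps $\{f_n^\alpha\}$ bounded in $L^{2/\alpha}$. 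For $\alpha<2$ this exponent exceeds $1$, giving uniform integrability, and convergence in measure plus uniform integrability yields $\big\||P_n|^\alpha-1\big\|_1\to 0$; the endpoint $\alpha=2$ is immediate from $\big\||P_n|^2-1\big\|_1\le 2\|f_n-1\|_2$.

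Finally, for (iii)$\Rightarrow$(i): $\big\||P_n|^\alpha-1\big\|_1\to 0$ gives $f_n^\alpha\to 1$ in measure, hence $f_n\to 1$ in measure; and since $\int_{S^1}|f_n-1|^2\,dz=2(1-\|f_n\|_1)\le 2$, the family $\{|f_n-1|^{\alpha/2}\}$ is bounded in $L^{4/\alpha}$ with $4/\alpha>1$, so it is uniformly integrable, and convergence in measure upgrades to $\int_{S^1}|f_n-1|^{\alpha/2}\,dz\to 0$, which is (i). The main obstacle is not any single inequality — the reduction (ii)$\Rightarrow L^2$ is a two-line squeeze plus the quadratic identity — but rather making the passage from convergence in measure back to (quasi-)norm convergence watertight in the regime $\alpha/2<1$, where $\|\cdot\|_{\alpha/2}$ is only a quasi-norm; this is exactly where the a priori bound $\int_{S^1}f_n^2\,dz=1$ is indispensable, since it is what supplies the uniform integrability required by both (ii)$\Rightarrow$(iii) and (iii)$\Rightarrow$(i).
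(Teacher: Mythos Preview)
Your argument is correct, and the spine matches the paper's: both use the monotonicity squeeze $\|P_n\|_{\alpha/2}\le\|P_n\|_1\le\|P_n\|_2=1$ to upgrade (ii) to $\|P_n\|_1\to 1$, and then the quadratic identity $\| |P_n|-1\|_2^2=2(1-\|P_n\|_1)$ to reach $L^2$-convergence (the paper outsources this last step to a citation, you write it out). Where you diverge is in the bridge to and from (iii). The paper exploits the factorization $|P_n|^\alpha-1=(|P_n|^{\alpha/2}-1)(|P_n|^{\alpha/2}+1)$: Cauchy--Schwarz on that product gives (ii)$\Rightarrow$(iii), and the trivial bound $|P_n|^{\alpha/2}+1\ge 1$ gives (iii)$\Rightarrow$(ii), all by elementary inequalities with no measure-theoretic machinery. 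You instead route through convergence in measure plus uniform integrability (Vitali), using the a priori $L^2$-bound to supply the required higher integrability. Your route is more systematic and entirely self-contained; the paper's factorization trick is lighter and avoids any appeal to Vitali, at the price of being a bit more ad hoc.
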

\begin{proof}We start by proving that $(ii)$ and $(iii)$ are equivalent. Assume that $(ii)$ holds. Then, by Cauchy-Schwarz inequality, we have
\begin{eqnarray}\label{Key1}
\int \big|\big|P_n(z)\big|^{\alpha}-1\big| dz&=& \int \big|P_n(z)\big|^{\alpha/2}-1\big| \big(\big|P_n(z)\big|^{\alpha/2}+1\big) dz \nonumber \\
&\leq& \big\|\big|P_n\big|^{\alpha/2}-1\big\|_2.\big\|\big|P_n\big|^{\alpha/2}+1\big\|_2.
\end{eqnarray}
But, since $0<\alpha \leq 2$, we have
\begin{eqnarray*}
\int \big|\big|P_n(z)\big|^{\alpha/2}+1\big|^2 dz &=& \int \big|P_n(z)\big|^\alpha dz +2 \int \big|P_n(z)\big|^{\alpha/2} dz+1 \\
&\leq& \big\|P_n\big\|_2^\alpha+ 2\big\|P_n\big\|_2^{\alpha/2}+1=4.
\end{eqnarray*}
Therefore, we can rewrite \eqref{Key1} as follows
$$\int \big|\big|P_n(z)\big|^{\alpha}-1\big| dz \leq 2 \big\|\big|P_n\big|^{\alpha/2}-1\big\|_2.$$
We further have
$$\int \big|\big|P_n(z)\big|^{\alpha/2}-1\big|^2 dz=\int \big|P_n(z)\big|^\alpha dz -2 \int \big|P_n(z)\big|^{\alpha/2} dz+1,
$$
and
$$\lim_{n }\int \big|P_n(z)\big|^\alpha dz=1,$$
since
$$\big\|P_n\big\|_{\alpha/2}\leq \big\|P_n\big\|_{\alpha} \leq \big\|P_n\big\|_{2}=1.$$
Whence
$$\int \big|\big|P_n(z)\big|^{\alpha/2}-1\big|^2 dz \tend{n}{+\infty}0,$$
and we conclude that $(iii)$ holds.
In the opposite direction, notice that we have
$$\int \big|\big|P_n(z)\big|^{\alpha}-1\big| dz \geq \int \big|\big|P_n(z)\big|^{\alpha/2}-1\big| dz,$$
since
$$\big(\big|P_n(z)\big|^{\alpha/2}+1\big) \geq 1.$$
Whence
$$\int \big|\big|P_n(z)\big|^{\alpha/2}-1\big| dz \tend{n}{+\infty}0.$$
This gives
$$\Big|\big\|P_n\big\|_{\alpha/2}^{\alpha/2}-1\Big|\tend{n}{+\infty} 0,$$
we thus get
$$\big\|P_n\big\|_{\alpha/2}\tend{n}{+\infty} 1.$$
We proceed now to prove that $(i)$ implies $(ii)$. By the triangle inequality, it is obvious that $(i)$ implies $(ii)$.
For $(ii)$ implies $(i)$, observe that $\big\|P_n\big\|_{\alpha/2}$ converges to $1$ as $n \longrightarrow +\infty$ implies that  $\big\|P_n\big\|_{1}$ converges to $1$ as $n \longrightarrow +\infty$, since $\alpha/2 \leq 1$. Moreover, $\big\|P_n\big\|_{1}$ converges to $1$ as $n \longrightarrow +\infty$ is equivalent to
$\big\||P_n|-1\big\|_1$ converges to $0$ as $n \longrightarrow +\infty$ (for the proof see \cite{Ab}.). We thus get
$$\big\||P_n|-1\big\|_{\frac{\alpha}2} \tend{n}{+\infty}0,$$
completing the proof of the proposition.
\end{proof}
 The flatness problem has nowadays a long history, and it turns out that the flat polynomials can be used to design
signals for use in digital transmission systems. Consequently, the increasing interest in the flatness problem is fueled by the need for pseudo-random sequences suitable for application in fields such as transmission and encryption.
 For more details, we refer the reader to \cite{Ab}, \cite{LaCour} and \cite{Jedwab}. Here, we  shall prove the following\\
\begin{Th}[{\bf Main result }]\label{main} The $L^2$-normalized sequence of Rudin-Shapiro polynomials is not $L^\alpha$-flat, for any
$\alpha \geq 0$.
\end{Th}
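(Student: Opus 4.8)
The plan is to exploit the defining recursion through the classical Rudin--Shapiro $\ell^2$ identity, which forces the moduli $|P_n|$ to be uniformly bounded after $L^2$-normalization, and to combine this with a lower bound on the fourth moment that prevents the modulus from concentrating at $1$.

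First I would normalize, setting $p_n = 2^{-n/2}P_n$ and likewise $q_n = 2^{-n/2}Q_n$; since $P_n$ and $Q_n$ have $2^n$ coefficients equal to $\pm 1$, one has $\|P_n\|_2^2 = \|Q_n\|_2^2 = 2^n$, so $\|p_n\|_2 = \|q_n\|_2 = 1$. The first key step is the identity $|P_{n+1}(z)|^2 + |Q_{n+1}(z)|^2 = 2\big(|P_n(z)|^2 + |Q_n(z)|^2\big)$, obtained by squaring the two recursion formulas and adding, so that the cross terms $\pm 2\re\big(\overline{P_n}\,z^{2^n}Q_n\big)$ cancel. By induction this gives $|P_n(z)|^2 + |Q_n(z)|^2 = 2^{n+1}$ for every $z \in S^1$, hence the uniform bound $|p_n(z)|^2 \le 2$, i.e. $\|p_n\|_\infty \le \sqrt 2$, for all $n$.

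The second, and decisive, structural step is to show that this uniform bound collapses every notion of $L^\alpha$-flatness onto a single $L^2$ statement. Suppose $(p_n)$ were $L^\alpha$-flat for some $\alpha \ge 0$. For $\alpha > 0$, convergence of $|p_n|$ to $1$ in $L^\alpha$ forces $|p_n| \to 1$ in measure (for $0 < \alpha \le 1$ this is also immediate from condition $(iii)$ of the Proposition, applied with parameter $2\alpha$); for $\alpha = 0$ the hypothesis $M(p_n) \to 1$ means $\int_{S^1} \log|p_n|^2\,dz \to 0$, and applying the elementary inequality $\phi(x) = x - 1 - \log x \ge 0$ to $x = |p_n(z)|^2$ (together with $\int |p_n|^2\,dz = 1$) shows $\int \phi(|p_n|^2)\,dz = -\int \log|p_n|^2\,dz \to 0$, whence again $|p_n|^2 \to 1$ in measure. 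Because $0 \le |p_n|^2 \le 2$ is uniformly bounded, the bounded convergence theorem upgrades convergence in measure to $|p_n|^2 \to 1$ in $L^2$; since $\int |p_n|^2\,dz = 1$ this is exactly $\|p_n\|_4^4 - 1 = \int \big(|p_n|^2-1\big)^2\,dz \to 0$, i.e. $\|p_n\|_4^4 \to 1$.

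It then remains to contradict $\|p_n\|_4^4 \to 1$, and this is where the real content lies. Writing $\|P_n\|_4^4 = \sum_m |c_m|^2$, where $c_m = \sum_k r_k r_{k+m}$ are the aperiodic autocorrelations of the Golay--Rudin--Shapiro sequence, the diagonal term $c_0 = 2^n$ contributes $\|P_n\|_2^4 = 2^{2n}$, while the off-diagonal sum $\sum_{m \ne 0}|c_m|^2$ is controlled by the recursion the $c_m$ inherit from the substitutive structure of $(r_k)$; the outcome is the classical evaluation $\|p_n\|_4^4 \to \tfrac43$ (equivalently, the asymptotic merit factor of the Rudin--Shapiro polynomials equals $3$), so $\mathrm{Var}\big(|p_n|^2\big) \to \tfrac13 \ne 0$. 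Since $\tfrac43 \ne 1$, this contradicts the previous step and shows that $(p_n)$ is not $L^\alpha$-flat for any $\alpha \ge 0$. I expect this fourth-moment estimate to be the main obstacle: the uniform bound and the reduction to a single $L^2$ statement are soft, but quantifying that $|p_n|$ stays genuinely spread out requires the explicit autocorrelation computation (or, what suffices equally well, a lower bound showing $\liminf_n \|p_n\|_4^4 > 1$).
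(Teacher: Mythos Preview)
Your argument is correct and follows essentially the same route as the paper: the parallelogram identity $|p_n|^2+|q_n|^2=2$ gives the uniform bound $\|p_n\|_\infty\le\sqrt2$, and one then argues by contradiction that any form of $L^\alpha$-flatness would force $\|p_n\|_4\to 1$, contradicting the $L^4$-norm theorem $\|p_n\|_4^4\to\tfrac43$ (which the paper proves explicitly via the recursion, while you cite it). The only cosmetic differences are that the paper passes through an a.e.\ convergent subsequence and dominated convergence where you use convergence in measure and bounded convergence, and that for $\alpha=0$ the paper invokes the monotonicity $\|f\|_0\le\|f\|_\alpha$ instead of your direct argument with $\phi(x)=x-1-\log x$.
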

\section{Proof of the main result}
Applying the parallelogram  law, we can write
\begin{eqnarray}\label{para}
\big|P_{n+1}(z)\big|^2+\big|Q_{n+1}(z)\big|^2=2^{n+2},
\end{eqnarray}
for any $n=0,1,2,\cdots$. From this, it is easy to see that we have
\begin{eqnarray}\label{RSKey1}
\big\|P_{n}\big\|_{\infty} \leq \sqrt{2} \big\|P_{n}\big\|_{2}.
\end{eqnarray}
We further have that $L^\alpha$-norm and $L^2$-norm are equivalent. Indeed, suppose $0<\alpha < 2$ and write
\begin{eqnarray*}
\big\|P_{n}\big\|_{2}^2 &=&\int_{S^1} \big|P_{n}(z)\big|^{2-\alpha} \big|P_{n}(z)\big|^{\alpha} dz\\
&\leq&  \big\|P_{n}(z)\big\|_{\infty}^{2-\alpha}  \big\|P_{n}\big\|_{\alpha}^{\alpha}.
\end{eqnarray*}
 By appealing to \eqref{RSKey1}, we get
$$\big\|P_{n}\big\|_{2}^2 \leq 2^{\frac{2-\alpha}2}  \big\|P_{n}\big\|_{2}^{2-\alpha} \big\|P_{n}\big\|_{\alpha}^{\alpha},$$
whence
$$\big\|P_{n}\big\|_{2} \leq 2^{\frac{2-\alpha}{2\alpha}}\big\|P_{n}\big\|_{\alpha}.$$
If $\alpha>2$, then by the same reasoning, we have
$$\big\|P_{n}\big\|_{\alpha} \leq 2^{\frac{\alpha-2}{2\alpha}}\big\|P_{n}\big\|_{2}.$$
Summarizing we have proved the following lemma.
\begin{lem}For any $\alpha>0$ and for $n \in \N^*,$ we have
$$c_\alpha \big\|P_{n}\big\|_{\alpha} \leq \big\|P_{n}\big\|_{2} \leq C_\alpha  \big\|P_{n}\big\|_{\alpha},$$
and
$$c_\alpha \big\|Q_{n}\big\|_{\alpha} \leq \big\|Q_{n}\big\|_{2} \leq C_\alpha \big\|Q_{n}\big\|_{\alpha},$$
where
$$c_\alpha=\left\{
             \begin{array}{ll}
              1 , & \hbox{if $0<\alpha<2$,} \\
              2^{\frac{2-\alpha}{2\alpha}} , & \hbox{if $\alpha>2$,}
             \end{array}
           \right.
{\textrm {~~and~~~}}~~~~~~~~~~
C_\alpha=\left\{
             \begin{array}{ll}
               2^{\frac{2-\alpha}{2\alpha}} & \hbox{if $0<\alpha<2$;} \\
               1 , & \hbox{if $\alpha>2$.}
             \end{array}
           \right.
$$
\end{lem}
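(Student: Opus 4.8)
The plan is to reduce the entire statement to one structural fact about the Rudin--Shapiro construction, namely the uniform comparison $\|P_n\|_\infty \le \sqrt2\,\|P_n\|_2$ recorded in \eqref{RSKey1}; once this is available, the two-sided norm equivalence is a routine interpolation valid for \emph{any} trigonometric polynomial enjoying such a bound. So I would first establish the $L^\infty$--$L^2$ comparison. Starting from the parallelogram identity \eqref{para}, $|P_{n+1}(z)|^2+|Q_{n+1}(z)|^2=2^{n+2}$ for every $z\in S^1$, one obtains the pointwise bound $|P_{n+1}(z)|^2\le 2^{n+2}$, hence $\|P_{n+1}\|_\infty^2\le 2^{n+2}$. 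Since $P_{n+1}$ has $2^{n+1}$ coefficients, each equal to $\pm1$, Parseval gives $\|P_{n+1}\|_2^2=2^{n+1}$, and therefore $\|P_{n+1}\|_\infty\le \sqrt2\,\|P_{n+1}\|_2$. This is the only place where the specific arithmetic of the Rudin--Shapiro polynomials intervenes.

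With \eqref{RSKey1} in hand, the case $0<\alpha<2$ follows by splitting the exponent as $2=(2-\alpha)+\alpha$ and using the crude bound $|P_n(z)|^{2-\alpha}\le\|P_n\|_\infty^{2-\alpha}$:
$$\|P_n\|_2^2=\int_{S^1}|P_n(z)|^{2-\alpha}\,|P_n(z)|^{\alpha}\,dz\le \|P_n\|_\infty^{2-\alpha}\,\|P_n\|_\alpha^{\alpha}\le 2^{\frac{2-\alpha}2}\,\|P_n\|_2^{2-\alpha}\,\|P_n\|_\alpha^{\alpha}.$$
Dividing by $\|P_n\|_2^{2-\alpha}$ and extracting $\alpha$-th roots yields $\|P_n\|_2\le 2^{\frac{2-\alpha}{2\alpha}}\|P_n\|_\alpha$, which is the right-hand estimate with $C_\alpha=2^{(2-\alpha)/(2\alpha)}$; the matching left-hand inequality $\|P_n\|_\alpha\le\|P_n\|_2$ is immediate from the monotonicity of $L^p$-norms on the probability space $(S^1,dz)$, giving $c_\alpha=1$.

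For $\alpha>2$ I would run the same interpolation in the opposite direction, splitting $\alpha=(\alpha-2)+2$:
$$\|P_n\|_\alpha^{\alpha}=\int_{S^1}|P_n(z)|^{\alpha-2}\,|P_n(z)|^{2}\,dz\le \|P_n\|_\infty^{\alpha-2}\,\|P_n\|_2^{2}\le 2^{\frac{\alpha-2}2}\,\|P_n\|_2^{\alpha},$$
so that $\|P_n\|_\alpha\le 2^{\frac{\alpha-2}{2\alpha}}\|P_n\|_2$, i.e. $c_\alpha=2^{(2-\alpha)/(2\alpha)}$, while monotonicity now supplies $\|P_n\|_2\le\|P_n\|_\alpha$ and hence $C_\alpha=1$. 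Finally, because the identity \eqref{para} is symmetric in $P_{n+1}$ and $Q_{n+1}$ and $Q_n$ likewise has $\pm1$ coefficients, the very same argument delivers \eqref{RSKey1}, and therefore the identical two-sided estimate, for $Q_n$. The one genuinely load-bearing step is the first: deducing the uniform $L^\infty\lesssim L^2$ bound from the parallelogram law. Everything following it is generic interpolation and needs no further structural input, so I do not expect any real obstacle beyond keeping track of the exponents that produce $c_\alpha$ and $C_\alpha$.
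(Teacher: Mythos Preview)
Your argument is correct and follows essentially the same route as the paper: both derive the $L^\infty$--$L^2$ bound \eqref{RSKey1} from the parallelogram identity \eqref{para}, then interpolate by splitting $2=(2-\alpha)+\alpha$ for $\alpha<2$ and $\alpha=(\alpha-2)+2$ for $\alpha>2$. You are slightly more explicit than the paper in spelling out that the trivial directions come from monotonicity of $L^p$-norms on a probability space and that the $Q_n$ case follows by symmetry, but there is no substantive difference.
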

We further notice that \eqref{RSKey1} can be strengthened as follows
\begin{lem}[Rudin-Shapiro's lemma ] For any $N \in \N^*$, we have
$$\Big\|\sum_{n=0}^{N}r_n z^n\Big\|_{\infty} \leq 5 \Big\|\sum_{n=0}^{N}r_n z^n\Big\|_{2}.$$
\end{lem}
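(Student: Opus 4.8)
The plan is to decompose an arbitrary partial sum of the Golay--Rudin--Shapiro sequence into a sum of \emph{complete} Rudin--Shapiro polynomials $P_k$ carried by disjoint, consecutive frequency bands, and then to control the resulting elementary estimates by a geometric series. The signs arising in the decomposition will be irrelevant, since only the triangle inequality is used afterwards.

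First I would set $L=N+1$ (the number of terms) and write its binary expansion $L=\sum_{j=1}^{r}2^{k_j}$ with $k_1>k_2>\cdots>k_r\geq 0$. Putting $\sigma_j=\sum_{i<j}2^{k_i}$, the index intervals $[\sigma_j,\sigma_j+2^{k_j})$ tile $\{0,1,\ldots,N\}$, the $j$-th block having length $2^{k_j}$. The heart of the argument is the combinatorial identity, valid for $0\leq m<2^{k_j}$,
$$r_{\sigma_j+m}=r_{\sigma_j}\,r_m,$$
which I would derive from the Brillhart--Carlitz formula $r_n=(-1)^{s(n)}$, where $s(n)$ counts the occurrences of ``$11$'' in the binary expansion of $n$. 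The key point is that the binary digits of $\sigma_j$ sit at positions $\geq k_{j-1}>k_j$, while those of $m$ sit at positions $\leq k_j-1$; since the bit of $\sigma_j+m$ at position $k_j$ is therefore $0$, no block ``$11$'' can straddle the two parts, and hence $s(\sigma_j+m)=s(\sigma_j)+s(m)$. As $r_0,\ldots,r_{2^{k_j}-1}$ are precisely the coefficients of $P_{k_j}$, this yields the clean decomposition
$$\sum_{n=0}^{N}r_n z^n=\sum_{j=1}^{r}\varepsilon_j\,z^{\sigma_j}\,P_{k_j}(z),\qquad \varepsilon_j:=r_{\sigma_j}\in\{-1,+1\}.$$

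With this decomposition in hand the estimate is routine. By the triangle inequality and \eqref{RSKey1}, which gives $\big\|P_k\big\|_\infty\leq\sqrt{2}\,\big\|P_k\big\|_2=\sqrt{2}\cdot 2^{k/2}$, one has
$$\Big\|\sum_{n=0}^{N}r_n z^n\Big\|_\infty\leq\sum_{j=1}^{r}\big\|P_{k_j}\big\|_\infty\leq\sqrt{2}\sum_{j=1}^{r}2^{k_j/2}.$$
Since the exponents $k_j$ are distinct and bounded by $k_1$, I would sum the geometric series $\sum_{j}2^{k_j/2}\leq 2^{k_1/2}\sum_{i\geq 0}2^{-i/2}=(2+\sqrt{2})\,2^{k_1/2}$. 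Finally $\big\|\sum_{n=0}^{N}r_n z^n\big\|_2=\sqrt{N+1}=\sqrt{L}\geq 2^{k_1/2}$, so the bound collapses to
$$\Big\|\sum_{n=0}^{N}r_n z^n\Big\|_\infty\leq\sqrt{2}\,(2+\sqrt{2})\,2^{k_1/2}=(2+2\sqrt{2})\,\Big\|\sum_{n=0}^{N}r_n z^n\Big\|_2\leq 5\,\Big\|\sum_{n=0}^{N}r_n z^n\Big\|_2,$$
because $2+2\sqrt{2}\approx 4.83<5$.

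The only delicate step is the combinatorial identity $s(\sigma_j+m)=s(\sigma_j)+s(m)$; everything else is the triangle inequality, a geometric sum, and the comparison $2^{k_1}\leq L$. I expect the main obstacle to be presenting this ``no straddling $11$'' observation cleanly, in particular making explicit that the gap $k_{j-1}>k_j$ forces a zero bit at position $k_j$, which is exactly what decouples the high part $\sigma_j$ from the low part $m$ and identifies each truncated block with a complete lower-order polynomial $P_{k_j}$ up to sign.
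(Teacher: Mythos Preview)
Your argument is correct. The combinatorial identity $s(\sigma_j+m)=s(\sigma_j)+s(m)$ is justified exactly as you describe: since the bits of $\sigma_j$ live at positions $\geq k_{j-1}>k_j$ while those of $m$ live at positions $\leq k_j-1$, the addition $\sigma_j+m$ carries no bit into position $k_j$, leaving a zero there that prevents any ``11'' block from straddling the two parts. The rest is routine, and the final constant $2+2\sqrt{2}<5$ is sharp for this method.

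Note, however, that the paper does not actually supply a proof of this lemma: it is simply stated and attributed to Rudin and Shapiro. The classical arguments in those sources (and in Katznelson's book, also cited) proceed directly from the recursive definition of the pair $(P_n,Q_n)$ rather than via the Brillhart--Carlitz formula; they decompose a partial sum into translates of both $P_k$'s and $Q_k$'s, using that each block of $2^k$ consecutive coefficients of the infinite sequence matches, up to a global sign, the coefficient string of either $P_k$ or $Q_k$. Your route through $r_n=(-1)^{s(n)}$ has the pleasant feature that only the $P_k$'s appear, with the $Q_k$'s absorbed into the signs $\varepsilon_j$; this is a little cleaner but relies on a result (the closed formula for $r_n$) that postdates the original proofs. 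Either way the analytic input is the same uniform bound $\|P_k\|_\infty\leq\sqrt{2}\,2^{k/2}$ coming from the parallelogram identity, and the geometric-series tail gives the same constant.
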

The previous lemma is due independently to Rudin \cite{Rudin} and Shapiro \cite{Shapiro}.\\

Let us put
$$R_N(z)=\sum_{n=0}^{N}r_n z^n.$$
Applying the same reasoning as before, it is a simple matter to establish the following proposition.
\begin{Prop} For any $\alpha> 0$ there exist  positive real numbers $C_\alpha$ and $D_\alpha$ such that
$$C_\alpha \Big\|\frac1{\sqrt{N}}R_N\Big\|_2 \leq \Big\|\frac1{\sqrt{N}}R_N\Big\|_\alpha \leq D_\alpha \Big\|\frac1{\sqrt{N}}R_N\Big\|_2,$$
for all $N \in \N^*$.
\end{Prop}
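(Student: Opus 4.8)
The plan is to recognize this proposition as the exact analogue of the first Lemma of this section, with the parallelogram-law bound \eqref{RSKey1} replaced by Rudin-Shapiro's lemma, which supplies the inequality $\|R_N\|_\infty \leq 5\,\|R_N\|_2$ \emph{uniformly in} $N$. Since the normalizing factor $1/\sqrt{N}$ is homogeneous of degree one and cancels from both sides of the desired chain of inequalities, it suffices to produce positive constants with $C_\alpha \|R_N\|_2 \leq \|R_N\|_\alpha \leq D_\alpha \|R_N\|_2$ valid for every $N \in \N^*$.

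First, for $0 < \alpha \leq 2$, I would obtain the upper estimate for free from the monotonicity of the $L^p$-norms on the probability space $(S^1, dz)$, namely $\|R_N\|_\alpha \leq \|R_N\|_2$, so that one may take $D_\alpha = 1$. For the lower estimate I would imitate the interpolation step of the first Lemma: write
$$\|R_N\|_2^2 = \int_{S^1} \big|R_N(z)\big|^{2-\alpha}\,\big|R_N(z)\big|^{\alpha}\,dz \leq \|R_N\|_\infty^{2-\alpha}\,\|R_N\|_\alpha^{\alpha},$$
then insert Rudin-Shapiro's lemma to get $\|R_N\|_2^2 \leq 5^{2-\alpha}\,\|R_N\|_2^{2-\alpha}\,\|R_N\|_\alpha^{\alpha}$, and solve for $\|R_N\|_2$ to obtain $\|R_N\|_2 \leq 5^{(2-\alpha)/\alpha}\,\|R_N\|_\alpha$. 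This yields $C_\alpha = 5^{-(2-\alpha)/\alpha}$.

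Second, for $\alpha > 2$ the roles reverse. The lower bound $\|R_N\|_2 \leq \|R_N\|_\alpha$ is again immediate from monotonicity, so $C_\alpha = 1$, while for the upper bound I would estimate
$$\|R_N\|_\alpha^{\alpha} = \int_{S^1} \big|R_N(z)\big|^{\alpha-2}\,\big|R_N(z)\big|^{2}\,dz \leq \|R_N\|_\infty^{\alpha-2}\,\|R_N\|_2^{2} \leq 5^{\alpha-2}\,\|R_N\|_2^{\alpha},$$
whence $\|R_N\|_\alpha \leq 5^{(\alpha-2)/\alpha}\,\|R_N\|_2$ and $D_\alpha = 5^{(\alpha-2)/\alpha}$. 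The borderline case $\alpha = 2$ is trivial.

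Finally, I would emphasize where the genuine content lies. The only nontrivial ingredient is Rudin-Shapiro's lemma, and in particular the fact that its constant $5$ is \emph{independent} of $N$. The earlier comparison \eqref{RSKey1} was available only for the full polynomials $P_n$ of degree $2^n-1$, where the parallelogram identity \eqref{para} applies; an arbitrary truncation $R_N$ need not satisfy any such identity, so one cannot invoke \eqref{RSKey1} directly. The uniform-in-$N$ $L^\infty$--$L^2$ comparison furnished by Rudin-Shapiro's lemma is precisely what removes this obstruction, after which the proof collapses to the same two-line Hölder interpolation as in the first Lemma.
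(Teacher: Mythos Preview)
Your proposal is correct and follows precisely the approach the paper intends: the paper itself offers no explicit proof, merely stating ``Applying the same reasoning as before, it is a simple matter to establish the following proposition,'' and you have carried out exactly that reasoning, replacing the parallelogram bound \eqref{RSKey1} by the Rudin--Shapiro lemma and repeating the H\"older interpolation of the first Lemma. Your closing remark on why the uniformity in $N$ of the constant $5$ is the essential point is also on target.
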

This proposition can be strengthened as follows.
\begin{Prop}\label{Katz} For any $\alpha > 0$ there exist positive real numbers $C_\alpha$ and $D_\alpha$ such that
$$C_\alpha \Big\|\frac1{\sqrt{M}}\sum_{n=N+1}^{N+M}r_n z^n\Big\|_2 \leq \Big\|\frac1{\sqrt{M}}\sum_{n=N+1}^{N+M}r_n z^n\Big\|_\alpha \leq D_\alpha \Big\|\frac1{\sqrt{M}}\sum_{n=N+1}^{N+M}r_n z^n\Big\|_2,$$
for all $N,M \in \N^*$.
\end{Prop}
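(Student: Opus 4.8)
The plan is to reduce everything to a single uniform $L^\infty$ estimate and then to repeat the interpolation argument already used to derive the Lemma. Put $S_{N,M}(z)=\sum_{n=N+1}^{N+M} r_n z^n$; since it has $M$ coefficients equal to $\pm 1$, we have $\big\|S_{N,M}\big\|_2=\sqrt{M}$, so the normalization in the statement is exactly $\frac1{\sqrt M}S_{N,M}$. Hence the proposition follows once I establish a windowed Rudin-Shapiro inequality
\begin{eqnarray}\label{plan-win}
\big\|S_{N,M}\big\|_{\infty}\leq K\sqrt{M}=K\big\|S_{N,M}\big\|_2,\qquad N,M\in\N^*,
\end{eqnarray}
with $K$ a universal constant. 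Granting \eqref{plan-win}, the derivation of the Lemma from \eqref{RSKey1} applies verbatim to $S_{N,M}$: for $0<\alpha<2$ one writes $\big\|S_{N,M}\big\|_2^2=\int |S_{N,M}|^{2-\alpha}|S_{N,M}|^{\alpha}\,dz\leq \big\|S_{N,M}\big\|_{\infty}^{2-\alpha}\big\|S_{N,M}\big\|_{\alpha}^{\alpha}$ and invokes \eqref{plan-win} to get $\big\|S_{N,M}\big\|_2\leq K^{(2-\alpha)/\alpha}\big\|S_{N,M}\big\|_{\alpha}$, while for $\alpha>2$ one bounds $\big\|S_{N,M}\big\|_{\alpha}^{\alpha}\leq \big\|S_{N,M}\big\|_{\infty}^{\alpha-2}\big\|S_{N,M}\big\|_2^{2}\leq K^{\alpha-2}\big\|S_{N,M}\big\|_2^{\alpha}$. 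Combined with the monotonicity of $L^p$-norms on the probability space $(S^1,dz)$, this yields the two-sided bound with $C_\alpha=\min(1,K^{-(2-\alpha)/\alpha})$ and $D_\alpha=\max(1,K^{(\alpha-2)/\alpha})$, after dividing through by $\sqrt{M}$.

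The heart of the matter is therefore \eqref{plan-win}, which I would obtain by a dyadic decomposition that exploits the self-similar block structure recalled in Section 2. The decisive fact is that for every $j\geq 0$ and every $k\geq 0$ the word $(r_{k2^j},r_{k2^j+1},\dots,r_{(k+1)2^j-1})$ is, as a $\pm 1$ word, exactly one of $A_j,B_j,\overline{A_j},\overline{B_j}$; equivalently $\sum_{i=0}^{2^j-1} r_{k2^j+i}z^i$ coincides with $\pm P_j(z)$ or $\pm Q_j(z)$. This is the assertion that reading $(r_n)$ in consecutive blocks of length $2^j$ produces a point of the substitution system on $\{A,B,\overline A,\overline B\}$, and it follows by induction on $j$ from $A_{n+1}=A_nB_n$, $B_{n+1}=A_n\overline{B_n}$ together with their conjugates $\overline{A_{n+1}}=\overline{A_n}\,\overline{B_n}$, $\overline{B_{n+1}}=\overline{A_n}B_n$. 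Consequently, by the parallelogram law \eqref{para}, which gives $\big\|P_j\big\|_{\infty},\big\|Q_j\big\|_{\infty}\leq \sqrt{2^{j+1}}$, every such block polynomial has sup-norm at most $\sqrt{2}\,2^{j/2}$.

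It then remains to partition the window $\{N+1,\dots,N+M\}$ into its maximal dyadic blocks $[k2^j,(k+1)2^j)$. By the standard dyadic decomposition of an interval of integers, this uses at most two blocks of each size $2^j$ and only sizes $2^j\leq M$; and since $|z|=1$, the unimodular factor $z^{k2^j}$ in front of each block leaves its modulus unchanged. Summing the per-block estimates therefore gives
\begin{eqnarray*}
\big\|S_{N,M}\big\|_{\infty}\leq \sum_{\textrm{blocks}} \sqrt{2}\,2^{j/2}\leq 2\sqrt{2}\sum_{j=0}^{\lfloor \log_2 M\rfloor} 2^{j/2}\leq \frac{4}{\sqrt{2}-1}\sqrt{M},
\end{eqnarray*}
which is \eqref{plan-win} with $K=4/(\sqrt{2}-1)$. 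I expect this step to be the main obstacle: one must (a) justify with care that \emph{every} maximal dyadic block coincides with one of the four admissible words $A_j,B_j,\overline{A_j},\overline{B_j}$, which is precisely where the alignment of blocks at multiples of $2^j$ and the substitutive description of $(r_n)$ are essential, and (b) control the multiplicity of blocks per scale so that the geometric series telescopes to $O(\sqrt{M})$ rather than $O(\sqrt{M}\,\log M)$. Once \eqref{plan-win} is in hand, the first paragraph delivers the proposition.
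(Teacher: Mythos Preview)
Your proposal is correct and follows the same route as the paper: the proposition is reduced to a uniform windowed sup-norm estimate $\big\|\sum_{n=N+1}^{N+M} r_n z^n\big\|_\infty \leq K\sqrt{M}$, after which the interpolation argument used to prove the earlier Lemma (splitting $|S|^2=|S|^{2-\alpha}|S|^{\alpha}$ for $\alpha<2$, and $|S|^{\alpha}=|S|^{\alpha-2}|S|^2$ for $\alpha>2$) gives the two-sided inequality. The paper does exactly this, but simply \emph{cites} the windowed sup-norm bound from Katznelson and Kahane--Salem rather than proving it; your dyadic block argument, exploiting that every aligned block $[k2^j,(k+1)2^j)$ carries one of $\pm P_j,\pm Q_j$, is in fact the standard proof of that cited lemma, so there is no genuine methodological difference---you have supplied the details the paper outsources, at the price of a larger constant $K$.
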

The proof of Proposition \ref{Katz} is a simple consequence of the following lemma  \cite{Kat}, \cite[p.134]{KahaneS}.
\begin{lem}For all positive integers $N, M$, we have
$$ \Big|\sum_{n=N+1}^{N+M}r_n z^n\Big| \leq \sqrt{M}.$$
\end{lem}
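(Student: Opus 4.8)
The plan is to deduce the desired order bound $\big|\sum_{n=N+1}^{N+M} r_n z^n\big|\le C\sqrt{M}$ (uniformly in $N$) from the dyadic self-similarity of the Golay--Rudin--Shapiro sequence together with the parallelogram identity \eqref{para}. The guiding observation is that \eqref{para} already controls \emph{complete} dyadic blocks: since $|P_j(z)|^2+|Q_j(z)|^2=2^{j+1}$ on $S^1$, we have $\|P_j\|_\infty,\|Q_j\|_\infty\le 2^{(j+1)/2}=\sqrt2\,\sqrt{2^{j}}$. The whole game is therefore to cut an arbitrary window $[N+1,N+M]$ into complete dyadic blocks and sum these estimates.

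First I would record the structural fact that every dyadic block of the sequence is, up to a sign, a translate of $P_j$ or $Q_j$. For a length-$2^j$ block occupying positions $[m,m+2^j)$, the substitution $\xi$ shows that the corresponding factor of the fixed point $u$ is exactly $\xi^{j}(u_k)$ for the appropriate letter $u_k\in\{0,1,2,3\}$, whence after applying $\pi$ the block of coefficients equals one of $A_j,B_j,\overline{A_j},\overline{B_j}$. Using $\overline{A}=-A$ and the identifications $A_j\leftrightarrow P_j$ and $B_j\leftrightarrow$ the coefficients of $Q_j$ (read off from $A_{j+1}=A_jB_j$ and $P_{j+1}=P_j+z^{2^j}Q_j$), the polynomial carried by such a block is precisely $\pm z^{m}P_j(z)$ or $\pm z^{m}Q_j(z)$. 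Since $|z|=1$, its modulus is bounded by $\|P_j\|_\infty$ or $\|Q_j\|_\infty$, hence by $\sqrt2\,2^{j/2}$.

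Next I would decompose. Any interval $[N+1,N+M]$ of integers admits a greedy (segment-tree) partition into maximal dyadic blocks, and this partition contains at most two blocks of each length $2^j$, with $2^j\le M$. Applying the triangle inequality and the block estimate above gives
$$\Big|\sum_{n=N+1}^{N+M} r_n z^n\Big|\le \sum_{j=0}^{\lfloor\log_2 M\rfloor} 2\cdot \sqrt2\,2^{j/2}\le \frac{2\sqrt2}{\sqrt2-1}\,2^{(\lfloor\log_2 M\rfloor+1)/2}\le C\sqrt M,$$
the geometric series being dominated by its last term. This yields the lemma in the form $C\sqrt M$; the clean constant $1$ asserted in the statement is not what this argument produces, but only a fixed $C$ is needed for Proposition~\ref{Katz}.

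The main obstacle is the bookkeeping in the structural step: one must track carefully how the conjugation $\overline{(\cdot)}$ (equivalently, a global sign) and the shift $z^m$ propagate through $\xi^j$, so as to be sure each dyadic block really is $\pm z^m P_j$ or $\pm z^m Q_j$ rather than some other combination. Once this dictionary between dyadic blocks and $\{\pm P_j,\pm Q_j\}$ is pinned down, the remaining estimate is a routine geometric summation resting entirely on \eqref{para}.
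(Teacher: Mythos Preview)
The paper does not supply its own proof of this lemma; it merely records the statement and refers to \cite{Kat} and \cite[p.134]{KahaneS}. Your argument---partitioning $[N+1,N+M]$ into \emph{aligned} dyadic blocks $[k2^j,(k+1)2^j)$, using the recursion for $P_n,Q_n$ (equivalently, the substitution $\xi$) to identify each such block with a shifted $\pm P_j$ or $\pm Q_j$, and then bounding each piece via the parallelogram identity \eqref{para}---is exactly the standard route taken in those references. The structural step is sound once one observes that the segment-tree decomposition produces only aligned blocks, which is precisely the alignment the recursion requires; and there are indeed at most two such blocks of each size $2^j\le M$, so the geometric summation goes through as you wrote.

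You are also right to flag the constant. The bound $\sqrt{M}$ with constant $1$ as printed cannot hold: already $\big|r_1 z+r_2 z^2\big|=|z+z^2|$ equals $2>\sqrt{2}$ at $z=1$. The cited sources give $C\sqrt{M}$ for an absolute constant $C$, and only such a bound---not the sharp constant---is invoked in Proposition~\ref{Katz}. So your caveat is well placed and your proof delivers what is actually needed.
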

We further remind that the spectral measure of the Rudin-Shapiro sequence is the Lebesgue measure. This result is due to Kamae  \cite{Ka}. An alternative proof of it was given by M. Queffelec  in \cite{Q}. Indeed, she established that the sequence of probability measures $(\nu_N=\big|\frac1{\sqrt{N}}R_N(z)\big|^2dz)$ converges weakly to the  Lebesgue measure.
Moreover, we have the following estimation of the correlation of the Rudin-Shapiro due to Mauduit-S\'{a}rk\"{o}zy \cite{MauduitS}.
\begin{Prop}For any positive integer $k$ and $N$, we have
\begin{eqnarray}\label{CF-estim1}
 &&\big| \widehat{\nu_N}(k) \big| <\frac{2k}{N}+\frac{4k}{N}\log_2\Big(\frac{2N}{k}\Big),
\end{eqnarray}
and
\begin{eqnarray}\label{CF-estim2}
 &&\max_{l \leq N} \big| \widehat{\nu_N}(l) \big| \geq \frac16.
\end{eqnarray}
\end{Prop}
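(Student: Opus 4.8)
The plan is to reduce both inequalities to statements about the aperiodic autocorrelations of the Golay--Rudin--Shapiro sequence. Expanding the density of $\nu_N=\big|\frac1{\sqrt N}R_N\big|^2$ gives
\[
\widehat{\nu_N}(k)=\frac1N\sum_{n=0}^{N-k}r_n r_{n+k},\qquad 0\le k\le N,
\]
so \eqref{CF-estim1} is an upper bound and \eqref{CF-estim2} a lower bound on the correlation sums $A_k(N)=\sum_{n=0}^{N-k}r_n r_{n+k}$. I would attack the upper bound through the arithmetic recursion \eqref{RSprop} and the lower bound through the word structure $A_{n+1}=A_nB_n$, $B_{n+1}=A_n\overline{B_n}$.

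For \eqref{CF-estim1}, the idea is to split $A_k(N)$ according to the parity of $n$ and substitute $r_{2n}=r_n$, $r_{2n+1}=(-1)^nr_n$. The odd values of $k$ force an alternating weight, so I would carry along the twisted sums $B_k(N)=\sum_n (-1)^n r_n r_{n+k}$ as well. A direct computation produces the coupled recursion
\[
A_{2j}=(1+(-1)^j)A_j,\quad A_{2j+1}=(-1)^jB_j+B_{j+1},\quad B_{2j}=(1-(-1)^j)A_j,\quad B_{2j+1}=(-1)^jB_j-B_{j+1},
\]
in which each right-hand side is evaluated over a range halved, up to a bounded boundary correction. Each step halves the lag, so after about $\log_2(2N/k)$ steps the lag overtakes the shrinking range and the recursion terminates; summing the contributions accumulated along the descent yields a bound of the form $|A_k(N)|\le 2k+4k\log_2(2N/k)$, which is \eqref{CF-estim1} after dividing by $N$.

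For \eqref{CF-estim2}, I would instead exploit the self-similarity of the fixed point. Iterating the substitution once more gives $A_{n+2}=A_nB_nA_n\overline{B_n}$, which displays two identical copies of the block $A_n$ at distance $2^{n+1}$, and the blocks satisfy $\sum_i A_n(i)^2=\sum_i B_n(i)^2=2^n$ together with $\sum_i A_n(i)B_n(i)=0$ (the last because $r_{2^n+i}=(-1)^{\epsilon}r_i$, with $\epsilon$ the top binary digit of $i$, equidistributed on $\{0,1\}$). For a length of the special form $N=3\cdot 2^m-1$ the prefix of $u$ is exactly $A_mB_mA_m$, and at the lag $l=2^{m+1}$ the whole correlation sum collapses to the single aligned pair of $A_m$'s, giving $\widehat{\nu_N}(l)=2^m/N\approx\frac13$. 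For an arbitrary $N$ one chooses $2^m$ near $N/3$ and the same alignment, and checks that at least half of the aligned mass survives, so that $\max_{l\le N}\big|\widehat{\nu_N}(l)\big|\ge\frac16$.

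The difficulty is bookkeeping rather than mechanism. In the upper bound the sequences $A_k$ and $B_k$ are genuinely coupled, so both must be propagated through every level, and one must verify that the boundary corrections stay uniformly bounded and sum to the clean logarithmic factor with the stated constants $2$ and $4$. In the lower bound the delicate point is exactly the passage from the special lengths $N=3\cdot 2^m-1$ to arbitrary $N$: for a general length the full window picks up partial overlaps of $B_m$ against $\overline{B_m}$ that partly cancel the main contribution, and one must re-optimise the choice of scale $m$ and lag $l$ to guarantee the uniform constant $\frac16$ rather than the ideal $\frac13$.
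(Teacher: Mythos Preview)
The paper does not prove this proposition at all: it is quoted from Mauduit--S\'ark\"ozy \cite{MauduitS} and used as a black box in the subsequent estimate of $\|R_N/\sqrt{N}\|_4$. So there is no ``paper's own proof'' to compare against; your sketch is in effect a reconstruction of the cited source, and on that level the mechanism you describe is the right one. The coupled recursion you write for the straight and twisted correlations is correct (I checked the four identities from \eqref{RSprop}), and it is exactly the engine Mauduit--S\'ark\"ozy use for \eqref{CF-estim1}; the block picture $A_{n+2}=A_nB_nA_n\overline{B_n}$ together with the orthogonality $\sum_i A_n(i)B_n(i)=0$ is likewise the right way to see \eqref{CF-estim2}.

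One point in your upper-bound sketch is not literally right and is worth flagging, since you present it as the counting that produces the constant. In your recursion both the lag and the range halve at each step, so their ratio is preserved and ``the lag overtakes the shrinking range after about $\log_2(2N/k)$ steps'' cannot happen as stated. What actually occurs is that the odd-lag branch $A_{2j+1}\to(B_j,B_{j+1})$ keeps a lag-$1$ term alive once the descent reaches lag $1$; from then on the recursion $B_1(M)=B_0(\lfloor M/2\rfloor)-B_1(\lfloor M/2\rfloor)+O(1)$ only halves the range, and it is this second phase, of length about $\log_2(N/k)$, that supplies the $\log_2(2N/k)$ factor. The bookkeeping you mention at the end therefore has to track two regimes (lag shrinking to $1$, then range shrinking to $O(1)$) and control the branching along the way; this is what Mauduit--S\'ark\"ozy carry out to get the explicit constants $2$ and $4$.
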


We are now able to prove our main result. Put
$$X_n=\frac{P_n}{\big\|P_n\big\|_2}=\frac{P_n}{2^{n/2}},$$
and
$$Y_n=\frac{Q_n}{\big\|Q_n\big\|_2}=\frac{Q_n}{2^{n/2}}.$$
We can thus rewrite \eqref{para} as
\begin{eqnarray}\label{para2}
|X_n|^2+|Y_n|^2=2,~~~~~~~~~~~\forall n \in \N.
\end{eqnarray}
Furthermore, it can be seen by induction that
\begin{eqnarray}\label{Billart}
Y_n=(-1)^n z^{2^n-1}X_n(-1/z),
\end{eqnarray}
for any $n \geq 0$ and for any $z \in S^1$. This result is due to Brillhart \& Carlitz \cite{Brillart}.  Applying \eqref{Billart}, we get
$$\big\|X_n\big\|_\alpha=\big\|Y_n\big\|_\alpha,$$
for any $\alpha \geq 0$. Formula \eqref{Billart} allows us also to prove the following theorem due to Littlewood \cite{Littlewood} and Newman-Bynes \cite{NewmanB}. For the convenience of the reader, we include its proof.
\begin{Th}[The $L^4$-norm theorem ]\label{fourth} $\ds
\big\|X_n\big\|_4 \tend{n}{+\infty}\sqrt[4]{\frac43}.$
\end{Th}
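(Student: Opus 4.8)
The plan is to establish a recursion for $\big\|P_n\big\|_4^4$ directly from the defining relations $P_{n+1}=P_n+z^{2^n}Q_n$ and $Q_{n+1}=P_n-z^{2^n}Q_n$, solve it, and normalize only at the very end. First I would introduce the two quantities the recursion will couple, $a_n=\int_{S^1}\big|P_n\big|^4\,dz$ and $c_n=\int_{S^1}\big|P_n\big|^2\big|Q_n\big|^2\,dz$, together with $b_n=\int_{S^1}\big|Q_n\big|^4\,dz$. Writing $S=\big|P_n\big|^2+\big|Q_n\big|^2$ and $T=z^{2^n}Q_n\overline{P_n}+z^{-2^n}\overline{Q_n}P_n$, the defining relations give $\big|P_{n+1}\big|^2=S+T$ and $\big|Q_{n+1}\big|^2=S-T$, so that $\big|P_{n+1}\big|^4+\big|Q_{n+1}\big|^4=2S^2+2T^2$; this is the identity I would integrate.

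The hard part, and really the only step with content, is to integrate $T^2$. Expanding gives $T^2=z^{2^{n+1}}Q_n^2\overline{P_n}^2+z^{-2^{n+1}}\overline{Q_n}^2P_n^2+2\big|P_n\big|^2\big|Q_n\big|^2$. Since $P_n$ and $Q_n$ have degree $2^n-1$, the product $Q_n^2\overline{P_n}^2$ is supported on the frequencies $\big[-(2^{n+1}-2),2^{n+1}-2\big]$; multiplying by $z^{\pm 2^{n+1}}$ shifts this band entirely away from the zero frequency, so the first two terms integrate to $0$ and $\int_{S^1}T^2\,dz=2c_n$. Because $S\equiv 2^{n+1}$ by the normalized parallelogram law \eqref{para}, one has $\int_{S^1}S^2\,dz=2^{2n+2}$, whence $a_{n+1}+b_{n+1}=2^{2n+3}+4c_n$. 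The Brillhart--Carlitz symmetry \eqref{Billart} yields $\big\|P_n\big\|_4=\big\|Q_n\big\|_4$, so $a_n=b_n$, and substituting $\big|Q_n\big|^2=2^{n+1}-\big|P_n\big|^2$ (again \eqref{para}) gives $c_n=2^{2n+1}-a_n$. Combining these collapses everything to the scalar first-order recursion $a_{n+1}=2^{2n+3}-2a_n$, with $a_0=\int_{S^1}\big|P_0\big|^4\,dz=1$.

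Finally I would solve this recursion. A particular solution $\tfrac{4}{3}4^n$, the homogeneous solution $(-2)^n$, and the initial value $a_0=1$ give $a_n=\tfrac{1}{3}\big(4^{n+1}-(-2)^n\big)$. Dividing by $\big\|P_n\big\|_2^4=2^{2n}=4^n$ and recalling $X_n=P_n/2^{n/2}$, I obtain $\big\|X_n\big\|_4^4=\tfrac{1}{3}\big(4-(-\tfrac12)^n\big)$, which tends to $\tfrac{4}{3}$, so $\big\|X_n\big\|_4\to\sqrt[4]{4/3}$. The only delicate points are the frequency-support bookkeeping that annihilates the cross terms and the correct normalizing constant $2^{n+1}$ in \eqref{para}; once those are in place the argument is elementary.
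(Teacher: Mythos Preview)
Your argument is correct and follows essentially the same route as the paper: derive a first-order linear recursion for the $L^4$ data from the defining relations together with the parallelogram law \eqref{para}, then solve it to obtain $\|X_n\|_4^4=\tfrac{4}{3}-\tfrac{1}{3}(-\tfrac12)^n$. The only noteworthy variation is that you kill the cross term $\int z^{\pm 2^{n+1}}Q_n^2\overline{P_n}^2\,dz$ by a direct frequency--support count, whereas the paper invokes the Brillhart--Carlitz relation \eqref{Billart} to see analyticity; since you use \eqref{Billart} anyway for $a_n=b_n$, this is a cosmetic difference rather than a genuinely distinct approach.
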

\begin{proof}Let $n \in \N^*$ and put
$$x_n=\big\|X_n\big\|_4^4+\big\|Y_n\big\|_4^4=\int_{S^1} \big|X_n\big|^4+\big|Y_n\big|^4 dz=2\big\|X_n\big\|_4^4,$$
and notice that we have the following identity
\begin{eqnarray*}
|P_n|^2|Q_n|^2&=&\Big|\big(P_{n-1}+z^{2^{n-1}}Q_{n-1}\big) \big(P_{n-1}-z^{2^{n-1}}Q_{n-1}\big)\Big|^2\\
&=& \big|P_{n-1}\big|^4+ \big|Q_{n-1}\big|^4-2\Re\Big(z^{2^{n}}P_{n-1}\overline{Q_{n-1}}\Big).
\end{eqnarray*}
We thus get
$$\int_{S^1} |X_n|^2 |Y_n|^2 dz=\frac14 \int_{S^1} \Big(\big|X_{n-1}\big|^4+\big|Y_{n-1}\big|^4\Big) dz=\frac14 x_{n-1},$$
by integration. Since $z^{2^{n}}P_{n-1}\overline{Q_{n-1}}$ is analytic by \eqref{Billart}. Now, squaring equation \eqref{para2} gives
$$x_n+\frac12 x_{n-1}=4.$$
A straightforward application of the well-known formula on the sequence generated by the first-order linear recurrence relation gives
$$x_n=-\frac{2}{3}\Big(-\frac{1}2\Big)^n+\frac{8}3.$$
Whence
$$ \big\|X_n\big\|_4^4= -\frac{1}{3}\Big(-\frac{1}2\Big)^n+\frac{4}3.$$
Letting $n\longrightarrow +\infty$, we conclude that $\ds \big\|X_n\big\|_4^4 \longrightarrow \frac{4}3.$ The proof of
the theorem is complete.
\end{proof}
Applying Mauduit-S\'{a}rk\"{o}zy estimation \eqref{CF-estim2}, we can estimate the $L^4$-norm of the Rudin-Shapiro polynomials as
follows.
\begin{Prop}\label{4-norm}$\ds \liminf_{N\longrightarrow +\infty}\Big\|\frac{R_N}{\sqrt{N}}\Big\|_4 \geq \sqrt[4]{\frac{19}{18}}.$
\end{Prop}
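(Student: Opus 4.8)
The plan is to express the fourth power of the $L^4$-norm of $R_N/\sqrt{N}$ as the $\ell^2$-norm of the Fourier (autocorrelation) coefficients of the density $|R_N/\sqrt{N}|^2$, and then to extract the desired lower bound from a single large autocorrelation coefficient supplied by the Mauduit-S\'ark\"ozy estimate \eqref{CF-estim2}.

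First I would set $g_N=\big|\tfrac{1}{\sqrt{N}}R_N\big|^2$, the density of $\nu_N$, and note that $g_N$ is real, nonnegative, with $\int_{S^1}g_N\,dz=\big\|\tfrac1{\sqrt N}R_N\big\|_2^2=\tfrac{N+1}{N}$. Applying Parseval's identity to $g_N$ gives
$$\Big\|\frac1{\sqrt N}R_N\Big\|_4^4=\int_{S^1}g_N^2\,dz=\sum_{k\in\Z}\big|\widehat{\nu_N}(k)\big|^2.$$
Since $g_N$ is real-valued, its coefficients satisfy $\widehat{\nu_N}(-k)=\overline{\widehat{\nu_N}(k)}$, so $\big|\widehat{\nu_N}(-k)\big|=\big|\widehat{\nu_N}(k)\big|$ and the sum splits as
$$\sum_{k\in\Z}\big|\widehat{\nu_N}(k)\big|^2=\widehat{\nu_N}(0)^2+2\sum_{k\geq 1}\big|\widehat{\nu_N}(k)\big|^2,\qquad \widehat{\nu_N}(0)=\frac{N+1}{N}\geq 1.$$

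Next I would retain only the single term realizing the maximum in \eqref{CF-estim2}. By that estimate there is a lag $1\leq l_0=l_0(N)\leq N$ with $\big|\widehat{\nu_N}(l_0)\big|\geq\tfrac16$; here it is essential that the maximizing lag can be taken to be nonzero, which is the genuine content of the Mauduit-S\'ark\"ozy bound, the lag $l=0$ already contributing $\widehat{\nu_N}(0)\approx 1$ and carrying no information. Discarding all remaining positive-lag terms then yields
$$\Big\|\frac1{\sqrt N}R_N\Big\|_4^4\geq \widehat{\nu_N}(0)^2+2\big|\widehat{\nu_N}(l_0)\big|^2\geq 1+2\cdot\frac{1}{36}=\frac{19}{18}.$$
Taking fourth roots and then the $\liminf$ over $N$ gives the claim.

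The computation is short, so the only step requiring care is the interpretation of \eqref{CF-estim2} as producing a \emph{nonzero} lag with autocorrelation at least $\tfrac16$, together with the bookkeeping that keeps the $k=0$ and $k=l_0$ contributions separate, so that the trivial mass $\widehat{\nu_N}(0)^2\to 1$ and the flatness defect $2\big|\widehat{\nu_N}(l_0)\big|^2\geq\tfrac1{18}$ add up to $\tfrac{19}{18}$. No delicate analysis is needed beyond Parseval and the cited estimate; indeed the inequality holds for every $N\in\N^*$, so that the $\liminf$ statement follows a fortiori.
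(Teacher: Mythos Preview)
Your proof is correct and follows essentially the same approach as the paper: expand $\|R_N/\sqrt{N}\|_4^4$ via Parseval as the sum of squared autocorrelations $|\widehat{\nu_N}(l)|^2$, then invoke the Mauduit--S\'ark\"ozy bound \eqref{CF-estim2} to extract a single nonzero-lag contribution of at least $2\cdot(1/6)^2=1/18$. Your version is in fact slightly more careful than the paper's, explicitly recording that $\widehat{\nu_N}(0)=(N+1)/N\geq 1$ and flagging that the maximizing lag in \eqref{CF-estim2} must be taken nonzero for the argument to be nontrivial.
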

\begin{proof}A straightforward computation gives
$$\Big|\frac{R_N(z)}{\sqrt{N}}\Big|^4=1+\sum_{|l| \leq N-1}\widehat{\nu_N}(l)z^l,~~~~~~~~~\forall z \in S^1.$$
Whence
$$\Big\|\frac{R_N}{\sqrt{N}}\Big\|_4^4=1+2\sum_{l=1}^{N-1}\big|\widehat{\nu_N}(l)\big|^2.$$
This combined with \eqref{CF-estim2} yields
$$\Big\|\frac{R_N}{\sqrt{N}}\Big\|_4^4 \geq 1+\frac2{36}=\frac{19}{18},$$
which proves the proposition.
\end{proof}
In the language of digital communications engineering, we have proved that the merit factor of the Golay-Rudin-Shapiro sequence is bounded by $18$. We remind that the merit factor of a sequence $u=\{u_n\}_{n}$ is given by
$$F_N(u)=\frac{\big|c_0\big|^2}{2E(N)}, ~~~~~~~~~\textrm{for~~all~~} N \in \N^*,$$
where $E(N)$ is the energy defined by
$$E(N)=\sum_{k=1}^{N}|c_k(N)|^2,$$
and for each $k \in \{0,\cdots, N\}$, $c_k(N)$ is the correlation of order $k$ given by
$$c_k(N)=\sum_{i=0}^{N-k-1}u_i \overline{u_{i+k}}.$$
Following complex analysis language, this notion can be defined as follows:\\

Let $N$ be a positive integer and put
$$U_N(z)=\sum_{j=0}^{N}u_j z^j, \forall z \in S^1.$$
Then
$$\big|U_N(z)\big|^2=c_0+\sum_{1 \leq |k| \leq N-1}c_k(N)z^k,$$
and by taking the $L^2$-norm of the polynomial $\big|U_N(z)\big|^2$, it follows that
$$\Big\|U_N\Big\|_4^4=|c_0|^2+2 \sum_{1 \leq k \leq N-1}\big|c_k(N)\big|^2.$$
Whence, the merit factor of $u$ can be defined by the following identity
$$\frac{\Big\|U_N\Big\|_4^4}{|c_0|^2}=1+\frac1{F_N(u)}.$$
We notice that $|c_0|^2=\big\|U_N\big\|_2^2.$  We thus define the merit factor of $U$ by
$$F_N(u)=\frac{\Big\|U_N\Big\|_2^2}{\Big\|U_N\Big\|_4^4-\Big\|U_N\Big\|_2^2}.$$
For a nice account on the merit factor problem, we refer the reader to \cite{Jedwab}.\\

We proceed to prove by contradiction our main result for $\alpha=1$. Suppose that the sequence $(R_N)$ is $L^1$-flat. Then, we can assert that
$$\int_{S^1} \big||X_n|-1\big|dz \tend{n}{+\infty}0.$$
It follows that we can extract a subsequence $(X_{n'})$ such that $|X_{n'}|$ converges a.e. to $1$. But $(|X_n|^4)_{n \geq 0}$ is uniformly bounded, thanks to \eqref{para2}. Therefore, by Lebesgue's dominated convergence theorem, we can assert that
$\big\|X_n\big\|_4$ converges to $1$, which contradicts the $L^4$-norm theorem (Theorem \ref{fourth}). We thus conclude that
$(X_n)$ is not $L^1$-flat.\\

\noindent{}We further claim that we have
$$\limsup_{N \longrightarrow +\infty} \big\|R_N \big\|_1<1.$$
Indeed, by contradiction, suppose that we have
$$\limsup_{N \longrightarrow +\infty} {\big\|R_N \big\|_1}=1.$$
Then, along a subsequence $(N_j)$, we can assert that
$\ds {\big\|R_{N_j} \big\|_1} \longrightarrow 1$ as $j\longrightarrow+\infty$. It follows that
$\Big\|\big|R_{N_j} \big|^2-1\Big\|_1 \longrightarrow 0$ as $j\longrightarrow+\infty$. Indeed, by Cauchy-Schwarz inequality, we have
\begin{eqnarray*}
\Big\|\big|R_{N_j} \big|^2-1\Big\|_1 &\leq& \Big\|\big|R_{N_j} \big|-1\Big\|_2 \Big\|\big|R_{N_j} \big|^2+1\Big\|_2\\
&\leq& 2 \Big\|\big|R_{N_j} \big|-1\Big\|_2.
\end{eqnarray*}
The last inequality is due to the triangle inequality combined with $||R_N||_2 =1$. We further have
$$\Big\|\big|R_{N_j} \big|-1\Big\|_2^2 =2\Big(1-\Big\|R_{N_j}\Big\|_1\Big).$$
This gives that there exists a subsequences which we denote again by $N_j$, such that $(|R_{N_j}|)$ converges almost everywhere to $1$. Therefore, by Lebesgue dominated convergence theorem,
$(|R_{N_j}|)$ converges to $1$ in the $L^4$-norm. We thus get $\big\|R_{N_j}\big\|_4 \longrightarrow 1$ as $j \longrightarrow +\infty$,   which contradicts Proposition \ref{4-norm}. By the same arguments, we can conclude that $(R_N)$ is not $L^\alpha$-flat for any $\alpha>0$, and since $\|f\|_0 \leq \|f\|_\alpha,$ for any $\alpha>0$. It follows that $(R_N)$ is not $L^\alpha$-flat, for any $\alpha \geq 0$. This finishes the proof of our main result.
\QEDA
\begin{rem}
In the connection with the merit factor problem for the class of \linebreak Littlewood sequences $\mathcal{L}=\ds \bigcup_{N=1}^{+\infty}\big\{+1,-1\big\}^N$, Newman and Byrnes in \cite{NewmanB} conjectured the following
\begin{Conj}[\textbf{of Newman and Byrnes}]
$$\ds \liminf_{N \longrightarrow +\infty}\Big(\min_{u \in {\big\{+1,-1\big\}^N} }\Big\|U_N(z)\Big\|_4\Big) \geq \sqrt[4]{\frac65}.$$
\end{Conj}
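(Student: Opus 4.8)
The natural first move is to translate the statement into the autocorrelation language set up above. By the merit-factor identity recorded above, $\|U_N\|_4^4=\|U_N\|_2^4+2\sum_{1\leq k\leq N-1}|c_k(N)|^2$, so for the $L^2$-normalized polynomial the conjecture is equivalent to $\liminf_N\min_u \|U_N\|_4^4/\|U_N\|_2^4\geq 6/5$, that is, to the worst-case correlation-energy lower bound $\sum_{k}|c_k(N)|^2\geq \frac1{10}\|U_N\|_2^4(1+o(1))$ holding \emph{simultaneously for every} $u\in\{+1,-1\}^N$; equivalently, $\limsup_N\max_u F_N(u)\leq 5$. Thus the first step I would take is to seek a lower bound on $\sum_k|c_k(N)|^2$ that is uniform over the whole cube, in the spirit of the Mauduit--S\'ark\"ozy bound \eqref{CF-estim2} used here for the single Rudin--Shapiro sequence, but now valid for all sign patterns at once.

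The qualitative heart of the matter is to show that the $\liminf$ is a constant strictly greater than $1$; this is precisely the (open) $L^4$ form of the Littlewood--Erd\H{o}s flatness problem, asserting that no sequence of $L^2$-normalized Littlewood polynomials is $L^4$-flat. Here the technique that succeeds for the Rudin--Shapiro family is of no use: that argument rested on the rigid recursion $P_{n+1}=P_n+z^{2^n}Q_n$, on the parallelogram law \eqref{para2}, and on the exact recursion for $\|X_n\|_4^4$ it produces (Theorem \ref{fourth}), none of which survives for a generic sign pattern. I would instead look for a Fourier-analytic or additive-combinatorial input bounding $\min_u\sum_k|c_k(N)|^2$ directly---for instance an averaging argument over $u$ coupled with an anticoncentration estimate guaranteeing that the minimum cannot drop far below the mean.

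The main obstacle is twofold. First, uniformity over the exponentially large family $\{+1,-1\}^N$ is exactly what every known method lacks, which is why even the qualitative bound $\liminf>1$ is unproven in general. Second, and more decisively for the \emph{precise} constant, the target $\sqrt[4]{6/5}$ appears to be too large: the cyclically rotated Legendre (Fekete) sequences attain an asymptotic merit factor near $6.34$, hence a normalized fourth power tending to about $1.16<6/5$, which already forces $\min_u\|U_N\|_4$ strictly below $\sqrt[4]{6/5}$. For this reason I would not expect to prove the conjecture in its stated form; the honest deliverable is the qualitative statement $\liminf_N\min_u\|U_N\|_4>1$, still open and genuinely hard, while the true extremal constant---conjecturally dictated by the optimal rotated-Legendre merit factor rather than by $6/5$---remains an unresolved extremal problem.
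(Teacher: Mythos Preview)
The statement you were asked to prove is not a theorem in the paper; it is explicitly recorded there as a \emph{conjecture} of Newman and Byrnes, inside a remark, with no proof and no claim of a proof. So there is no ``paper's own proof'' to compare against, and your proposal is not in competition with anything the paper establishes.

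That said, your write-up is not a proof either, and you are candid about this. Your translation into the merit-factor language is correct: the conjecture is equivalent to $\limsup_N \max_{u\in\{+1,-1\}^N} F_N(u)\le 5$. Your assessment of the obstacles is also accurate. The Rudin--Shapiro machinery (the recursion, the parallelogram identity, the exact fourth-moment recurrence) is specific to that family and gives no leverage on a worst-case bound over the full cube $\{+1,-1\}^N$; and the qualitative statement $\liminf_N \min_u \|U_N\|_4 > 1$ is precisely the open $L^4$ merit-factor problem.

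Your final observation deserves emphasis: the specific constant $\sqrt[4]{6/5}$ is almost certainly too large. Optimally rotated Legendre (shifted Fekete) sequences are known to attain asymptotic merit factor $6$ (H{\o}holdt--Jensen), and more recent constructions push this to roughly $6.34$; either value exceeds $5$, so the corresponding normalized $L^4$ norm tends to a limit strictly below $\sqrt[4]{6/5}$. Hence the conjecture, in the form stated, is expected to be false, and no valid proof can exist. The paper does not address this tension; it merely records the conjecture as numerical speculation of Newman and Byrnes and notes its implications for $L^1$-flatness and for Erd\H{o}s's conjecture.
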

Following the authors, this conjecture seems to be supported by the extensive numerical evidence employing the Bose-Einstein statistics methodology of statistical mechanic. Notice that this conjecture solves the merit factor problem which says that
the minimum of the $L^4$-norm of the $L^2$-normalized analytic polynomials with $\pm 1$ coefficients is great than $1+\delta$ for some $\delta>0$. We remind that the analytic polynomials with $\pm 1$ coefficients are nowdays called Littlewood polynomials. Of course, this conjecture implies that the $L^4$-uniformly integrable sequence of \linebreak Littlewood polynomials cannot be $L^1$-flat. We further notice that Newman-Byrnes's conjecture implies that Erd\"{o}s conjecture holds. Let us recall that Erd\"{o}s conjecture for the class of Littlewood polynomials says that for any $L^2$-normalized Littlewood polynomials $P$, we have $\ds \big\|P\big\|_{\infty} \geq (1+C)$ for some constant $C>0$ \cite{Erdos}.
\end{rem}
\begin{thank}
The author wishes to express his thanks to Fran\c cois \linebreak Parreau, Jean-Paul Thouvenot, Benjamin Weiss, Michael Lin, Bill Veech, Eli \linebreak Glasner, Jon Aaronson, Mahendra Nadkarni, Doureid Hamdan and Bernard
Host for many stimulating conversations on the subject.
\end{thank}
\appendix
\section{On the Fekete polynomials.}
Let $p$ be a prime number. The Fekete polynomials are defined by
$$Q_p(z)=\sum_{k=1}^{p-1}\left(\frac{k}{p}\right)z^k,$$
where $\ds \left(\frac{k}{p}\right)$ is the Legendre symbol. We recall that the Legendre symbol is given by
$$
\left(\frac{k}{p}\right)=\left\{
    \begin{array}{ll}
      0, & \hbox{if $k=0$;} \\
      1, & \hbox{if a square modulo~$p$;} \\
      -1, & \hbox{if not.}
    \end{array}
  \right.
$$
The Fekete polynomials are intimately linked to the Gauss sum given by
$$G_2(\chi)=\sum_{n}\chi(n) \omega_p^n=Q_p(\omega_p),$$
where $\chi(n)=\ds \left(\frac{k}{p}\right)$ and $\omega_p$ is the $p$-root of unity given by
$\omega_p=e^{2 \pi i 1/p}.$ In his 1811's paper, Gauss proved the following
$$Q_p(\omega_p^k)=\epsilon_p \sqrt{p}\left(\frac{k}{p}\right),~~~~~~~k=0,\cdots,p-1,$$
where $\epsilon_p$ is defined by
$$\epsilon_p=\left\{
               \begin{array}{ll}
                 1, & \hbox{if $p \equiv 1$ (mod 4);} \\
                 i, & \hbox{if $p \equiv 3$ (mod 4).}
               \end{array}
             \right.
$$
For the proof of the Gauss formula, we refer to \cite[p.10]{BerndtEW}. We thus get
\begin{eqnarray}\label{Gauss}
\big|Q_p(\omega_p^k)\big|=\sqrt{p},
\end{eqnarray}
for any $k=0,\cdots,p-1$.
It follows that the Fekete polynomials may possibly be a candidate to solve the Littlewood's well-known
problem whether there exists a sequence of analytic trigonometric polynomials $P_n$ with coefficients $\pm 1$  such that
$$c.\big\|P_n\big\|_2 \leq \big|P_n(z)\big| \leq C.\big\|P_n\big\|_2,$$
for all $z \in S^1$ and for two absolute positive constants $c, C$. Unfortunately, \linebreak Montgomery in \cite{Montgomery} proved that there is an absolute constant $c$ such that
$$\frac2{\pi}\sqrt{p}\log(\log(p)) < \big\|Q_p\big\|_{\infty}\egdef\sup_{z \in S^1}\big|Q_p(z)\big| \leq c \sqrt{p} \log(p).$$
In \cite{Erdelyi-Lub}, Erd\'elyi and Lubinsky observed that if $p \equiv 1$ (mod 4) then the Fekete polynomials are not $L^1$-flat  by applying the following Littlewood's criterion.
\begin{Th}[Littlewood's criterion \cite{Littlewood2}] Let $\ds f_n(t)=\sum_{j=0}^{n}a_m \cos(m t+\phi_m)$ and assume that we have
$$\sum_{m=1}^{n}a_m^2 \leq \frac{k}{n^2} \sum_{m=1}^{n}m^2a_m^2,$$
for some absolute constant $k$. Then, for any $\alpha>0$ there exists a constant $A(k,\alpha)$ such that
$$
  \begin{array}{ll}
    \|f_n\|_\alpha \leq \big(1-A(k,\alpha)\big)\|f_n\|_2, & \hbox{ if $\alpha<2$;} \\
    \|f_n\|_\alpha \geq \big(1+A(k,\alpha)\big)\|f_n\|_2, & \hbox{ if $\alpha>2$.}
  \end{array}
$$
\end{Th}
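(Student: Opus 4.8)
The plan is to reduce the whole family of inequalities (all $\alpha>0$, both regimes) to a single $L^1$ estimate, and then to extract that estimate from the spectral hypothesis by a derivative/sub‑level‑set argument. First I would put the hypothesis into spectral form. Writing $f_n=\sum_m a_m\cos(mt+\phi_m)$ and using Parseval, $2\|f_n\|_2^2=\sum_{m\ge1}a_m^2$ and $2\|f_n'\|_2^2=\sum_{m\ge1}m^2a_m^2$, so the assumption $\sum a_m^2\le\frac{k}{n^2}\sum m^2a_m^2$ is exactly $\|f_n'\|_2^2\ge\frac{n^2}{k}\|f_n\|_2^2$. After normalizing $\|f_n\|_2=1$ this reads $\|f_n'\|_2^2\ge n^2/k$; since Bernstein gives $\|f_n'\|_2\le n\|f_n\|_2$, the condition says the derivative is within a factor $\sqrt k$ of its maximal value, i.e. the spectrum is concentrated at high frequency.

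Second, I would show that everything follows from one gap below the exponent $2$: the existence of $A_1=A_1(k)>0$ with $\|f_n\|_1\le(1-A_1)\|f_n\|_2$. Indeed, monotonicity of $p\mapsto\|f_n\|_p$ on the probability space $S^1$ already yields $\|f_n\|_\alpha\le\|f_n\|_1\le 1-A_1$ for $0\le\alpha\le1$ (including the Mahler case $\alpha=0$), while the log-convexity of $p\mapsto\log\|f_n\|_p$ (Lyapunov's inequality), applied to the exponents $1,\alpha,2$ when $1<\alpha<2$ and to $1,2,\alpha$ when $\alpha>2$, converts the $L^1$ gap into $\|f_n\|_\alpha\le\|f_n\|_1^{2/\alpha-1}<1$ for $\alpha<2$ and into $\|f_n\|_\alpha\ge\|f_n\|_1^{-(1-2/\alpha)}>1$ for $\alpha>2$. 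This is the same reduce-to-a-single-exponent mechanism used in the flatness-characterization Proposition above, and it produces the required $A(k,\alpha)>0$ in every regime.

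Third, the crux, I would prove the $L^1$ gap itself. Writing $u=f_n^2-1$ (mean zero, degree $\le 2n$), one has the identity $2\big(1-\|f_n\|_1\big)=\int_{S^1}\frac{u^2}{(1+|f_n|)^2}\,dz$, so it suffices to bound the right-hand side below. I would distribute the spectral budget $\|f_n'\|_2^2\ge n^2/k$ over the super-level set $\{|f_n|>\theta\}$ and the sub-level set $\{|f_n|\le\theta\}$. On the former, the identity $2f_nf_n'=u'$ together with the Parseval bound $\|u'\|_2\le 2n\|u\|_2$ gives $\int_{\{|f_n|>\theta\}}(f_n')^2\le\theta^{-2}\|f_nf_n'\|_2^2\le n^2\|u\|_2^2/\theta^2$. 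On the latter I would invoke the $L^4$ Bernstein inequality $\|f_n'\|_4\le n\|f_n\|_4$ and Hölder to bound $\int_{\{|f_n|\le\theta\}}(f_n')^2$ by $n^2\|f_n\|_4^2\,\mathrm{meas}\{|f_n|\le\theta\}^{1/2}$. Since $\|f_n\|_4^4=1+\|u\|_2^2$, comparing with the lower bound $n^2/k$ forces either the variance $\|u\|_2^2$ or the sub-level measure to be bounded below by a positive $c(k)$; feeding this back into the displayed identity yields $1-\|f_n\|_1\ge A_1(k)>0$.

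The hard part will be the absence of any a priori bound on $\|f_n\|_\infty$: a tall, thin peak could in principle carry derivative energy and variance while leaving $\|f_n\|_1$ close to $1$, which would defeat a naive $L^\infty$-based estimate. The device that removes this is precisely $\|f_n'\|_4\le n\|f_n\|_4$, which controls the derivative energy on the sub-level set with no reference to $\|f_n\|_\infty$, so the mandated bound $\|f_n'\|_2^2\ge n^2/k$ cannot be hidden in the region where $|f_n|$ is large and must instead force $|f_n|$ to dip on a set of definite measure. The remaining effort is to make the high/low bookkeeping uniform in $n$ and to track the dependence on $k$; one expects $A_1(k)\sim 1/k$, vanishing as $k\to\infty$, consistent with the near-flat extremal profiles $f_n\approx\sqrt{1-1/k}+\sqrt{2/k}\,\cos(nt)$, so that $A(k,\alpha)\to 0$ as $k\to\infty$, exactly as the statement permits.
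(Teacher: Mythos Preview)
The paper does not prove this theorem at all: Littlewood's criterion is quoted from \cite{Littlewood2} and then applied to the self-reciprocal Fekete case, so there is no ``paper's own proof'' to compare against. What follows is therefore an assessment of your plan on its own merits.

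Your first two steps are sound. The translation of the hypothesis into $\|f_n'\|_2^2\ge n^2\|f_n\|_2^2/k$ is correct, and the Lyapunov reduction to a single $L^1$ gap is clean: once $\|f_n\|_1\le 1-A_1(k)$ is known, log-convexity of $p\mapsto\log\|f_n\|_p$ indeed propagates the gap to every $\alpha\neq 2$ with an explicit $A(k,\alpha)>0$.

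The third step, however, has a genuine hole, and it is exactly the hole you yourself flag in the last paragraph. Your high/low splitting together with $\|f_n'\|_4\le n\|f_n\|_4$ yields, after dividing by $n^2$,
\[
\frac{1}{k}\;\le\;\frac{\|u\|_2^2}{\theta^2}\;+\;\bigl(1+\|u\|_2^2\bigr)^{1/2}\,\bigl|\{|f_n|\le\theta\}\bigr|^{1/2},
\]
so that at least one of $\|u\|_2^2$ or $|\{|f_n|\le\theta\}|$ is bounded below by some $c(k,\theta)>0$. The second alternative is fine: on $\{|f_n|\le\theta\}$ with $\theta<1$ one has $u^2/(1+|f_n|)^2\ge(1-\theta^2)^2/4$, and plugging into $2(1-\|f_n\|_1)=\int u^2/(1+|f_n|)^2$ gives the desired gap. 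But the first alternative does \emph{not} feed back: a lower bound on $\|u\|_2^2=\int u^2$ gives no lower bound on $\int u^2/(1+|f_n|)^2$ unless you control $|f_n|$ from above on the support of $u^2$, which is precisely the missing $L^\infty$ information. Equivalently, $\|u\|_2^2\ge c$ says only $\|f_n\|_4\ge(1+c)^{1/4}$, and for an $L^2$-normalized function a large $L^4$ norm does \emph{not} force $\|f_n\|_1$ below $1$ (Lyapunov goes the wrong way here: it gives $\|f_n\|_1\ge\|f_n\|_4^{-2}$). A tall thin peak of height $M$ on a set of measure $\asymp M^{-2}$ keeps $\|f_n\|_2=1$ and $\|f_n\|_1$ close to $1$ while making $\|u\|_2^2$ large; your $L^4$-Bernstein device moved the peak out of the derivative estimate but did not prevent it from absorbing all of $\|u\|_2^2$.

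To close this branch you need an additional mechanism that either (i) bounds the contribution of $\{|f_n|>\Theta\}$ to $\|u\|_2^2$ for large $\Theta$ using only degree and $L^2$ data (e.g.\ a Nikolskii/Remez-type inequality controlling how much $L^4$ mass a degree-$n$ polynomial can put on a small set), or (ii) runs the splitting at two thresholds and shows that the peak cannot simultaneously carry the derivative energy and hide from the sub-level set. Littlewood's original argument in \cite{Littlewood2} is of the second flavor and is noticeably more delicate than the single dichotomy you sketch; as written, your plan stops one nontrivial idea short of a proof.
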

\noindent{}In order to apply the criterion of Littlewood we observe that if $p\equiv 1$  $($mod $4)$ then $Q_p$ is self-reciprocal, that is,
$$z^{p-1}Q_p\Big(\frac1{z}\Big)=Q_p(z).$$
Hence
$$Q_p(t)=e^{2it}\sum_{k=0}^{\frac{p-3}2}a_k \cos((2k+1)t),~~~~~~~a_k=\pm 2.
$$
\QEDA\\

Here, we shall prove the following.
\begin{Th}\label{mainF} The Fekete polynomials are not $L^\alpha$-flat for any $\alpha \geq 0$.
\end{Th}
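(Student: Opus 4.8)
The plan is to finish the argument initiated by the Erd\'elyi--Lubinsky observation \cite{Erdelyi-Lub}, by applying Littlewood's criterion uniformly to \emph{every} prime $p$ and to every exponent $\alpha\ge 0$. The first step is to reduce $\big|Q_p\big|$ to the modulus of a \emph{real} trigonometric polynomial with coefficients $\pm 2$, so that the criterion becomes applicable. This rests on the reciprocity governed by $\big(\tfrac{-1}{p}\big)=(-1)^{(p-1)/2}$. When $p\equiv 1\ (\mathrm{mod}\ 4)$ one has $\big(\tfrac{p-k}{p}\big)=\big(\tfrac{k}{p}\big)$, and pairing $k$ with $p-k$ and factoring out a unimodular phase yields the cosine representation recorded above. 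When $p\equiv 3\ (\mathrm{mod}\ 4)$ one has instead $\big(\tfrac{p-k}{p}\big)=-\big(\tfrac{k}{p}\big)$, and the same pairing produces
$$\big|Q_p(e^{it})\big|=2\Big|\sum_{k=1}^{(p-1)/2}\Big(\tfrac{k}{p}\Big)\sin\big((\tfrac{p}{2}-k)t\big)\Big|,$$
so that $\big|Q_p\big|=\big|g_p\big|$ with $g_p$ a real sine sum of coefficients $\pm 2$. In either residue class $g_p=\sum_m a_m\cos(mt+\phi_m)$ with $a_m=\pm 2$ supported on the frequencies $\{|\tfrac{p}{2}-k|:1\le k\le\tfrac{p-1}{2}\}$, and $\|g_p\|_\beta=\|Q_p\|_\beta$ for every $\beta$, so $Q_p$ is $L^\alpha$-flat if and only if $g_p$ is.

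Next I would check that the hypothesis of Littlewood's criterion holds with a constant $k$ independent of $p$. Since the $(p-1)/2$ nonzero coefficients all satisfy $|a_m|=2$ and the frequencies fill the range up to $n\sim p/2$ essentially uniformly, one gets
$$\sum_m a_m^2=2(p-1),\qquad \frac{1}{n^2}\sum_m m^2 a_m^2\ \sim\ \frac{4}{p^2}\cdot\frac{p^3}{6}=\frac{2p}{3},$$
whence $\sum_m a_m^2\le \frac{4}{n^2}\sum_m m^2 a_m^2$ for every $p$; thus Littlewood's hypothesis holds with $k=4$, uniformly in $p$. This uniform spread of the coefficient mass toward high frequencies is the quantitative heart of the matter.

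With the hypothesis verified, Littlewood's criterion supplies for each $\alpha\neq 2$ a constant $A(\alpha)>0$ (independent of $p$) such that the $L^2$-normalized Fekete polynomial $X_p=Q_p/\|Q_p\|_2$ obeys $\|X_p\|_\alpha\le 1-A(\alpha)$ when $0<\alpha<2$ and $\|X_p\|_\alpha\ge 1+A(\alpha)$ when $\alpha>2$. Since $L^\alpha$-flatness forces $\|X_p\|_\alpha\to 1$, the sequence is not $L^\alpha$-flat for any $\alpha\neq 2$. The two remaining values follow from the case $\alpha=1$: the identity $\big\||X_p|-1\big\|_2^2=2\big(1-\|X_p\|_1\big)$ together with $\|X_p\|_1\le 1-A(1)$ excludes $\alpha=2$, while $M(X_p)=\|X_p\|_0\le\|X_p\|_1\le 1-A(1)$ excludes $\alpha=0$; and $\|X_p\|_\infty\ge\frac{2}{\pi}\log\log p\to\infty$ by Montgomery's bound \cite{Montgomery} disposes of $\alpha=+\infty$. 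As all these estimates hold along the entire sequence of primes, no subsequence of $\big|X_p\big|$ can tend to $1$ in any of these senses, which is the theorem.

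The step I expect to be the main obstacle is this uniform verification in the case $p\equiv 3\ (\mathrm{mod}\ 4)$: one must confirm that the anti-reciprocal (sine) form is genuinely a real trigonometric polynomial of the type demanded by Littlewood's criterion, and that the half-integer frequencies created by factoring out $e^{ipt/2}$ do not spoil the moment comparison of the previous paragraph (the harmless substitution $t\mapsto 2t$ clears these half-integers, turning them into the odd integers $1,3,\dots,p-2$ and leaving every $L^\alpha$-norm unchanged). Everything else is a direct transcription of the Erd\'elyi--Lubinsky computation combined with the elementary comparisons between the $L^\alpha$-norms established earlier.
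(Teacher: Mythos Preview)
Your argument is correct and takes a genuinely different route from the paper's. The paper does \emph{not} push Littlewood's criterion beyond the Erd\'elyi--Lubinsky observation for $p\equiv 1\ (\mathrm{mod}\ 4)$; instead it argues by contradiction through the $L^4$-norm: the Marcinkiewicz--Zygmund inequality together with Gauss's formula $|Q_p(\omega_p^k)|=\sqrt{p}$ gives a uniform bound $\|Q_p/\sqrt{p}\|_\beta\le c_\beta$ for every $\beta>0$, so that any subsequence with $|Q_p/\sqrt{p}|\to 1$ a.e.\ would, by Vitali's theorem, force $\|Q_p/\sqrt{p}\|_4\to 1$, contradicting the H{\o}holdt--Jensen limit $\|Q_p\|_4^4/p^2\to 5/3$. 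Your approach instead completes the reciprocity picture: for $p\equiv 3\ (\mathrm{mod}\ 4)$ the anti-reciprocal identity gives $|Q_p|=|g_p|$ with $g_p$ a real sine sum, and the substitution $t\mapsto 2t$ (legitimate since $|g_p|$ is $2\pi$-periodic even though $g_p$ is only $4\pi$-periodic) puts $g_p$ into the integer-frequency form demanded by Littlewood's criterion with a uniform constant $k$. What your route buys is a \emph{quantitative} and uniform gap $\|X_p\|_\alpha\le 1-A(\alpha)$ for every prime, without invoking the H{\o}holdt--Jensen merit-factor computation; what the paper's route buys is a single template (the $L^4$-limit plus uniform integrability) that treats Rudin--Shapiro and Fekete polynomials in parallel. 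One small remark: your asymptotic $\tfrac{1}{n^2}\sum m^2 a_m^2\sim \tfrac{4}{p^2}\cdot\tfrac{p^3}{6}$ is numerically a bit loose (after $t\mapsto 2t$ one has $n=p-2$ and $\sum m^2 a_m^2=4\sum_{j\ \mathrm{odd}\le p-2}j^2\sim \tfrac{2p^3}{3}$), but the conclusion that $k$ can be taken uniformly (indeed $k=3$ suffices for all $p\ge 3$) is unaffected.
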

For that we need the following theorem due H{\o}holdt and Jensen  \cite{Hoholdt-Jensen}.
\begin{Th}The $L^4$-norm of the $L^2$-normalized Fekete polynomials verify
$$ \frac{\|Q_p\|_4}{p}\tend{p}{+\infty}\sqrt[4]{\frac53}.$$
\end{Th}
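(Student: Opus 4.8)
The plan is to reduce the claim to the asymptotic evaluation of the fourth power $\big\|Q_p\big\|_4^4$, since the assertion for the $L^2$-normalized polynomials is equivalent to $\big\|Q_p\big\|_4^4\sim\frac{5}{3}p^2$; dividing by $\big\|Q_p\big\|_2^4=(p-1)^2$ then gives $\big\|Q_p\big\|_4/\big\|Q_p\big\|_2\tend{p}{+\infty}\sqrt[4]{5/3}$. Writing $\chi=\left(\frac{\cdot}{p}\right)$, I would first expand $\big|Q_p(z)\big|^2=\sum_{d=-(p-2)}^{p-2}\gamma_d z^d$ on $S^1$, where $\gamma_d=\sum_k\chi(k)\chi(k+d)$ are the aperiodic autocorrelations of the Legendre symbol, and apply Parseval to get $\big\|Q_p\big\|_4^4=\sum_d\gamma_d^2$. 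Equivalently, $\big\|Q_p\big\|_4^4$ is the character sum $\sum\chi(jkln)$ over the box $1\le j,k,l,n\le p-1$ subject to the \emph{integer} constraint $j+l=k+n$; it is precisely this integer (rather than mod $p$) constraint that carries all the difficulty.

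Next I would exploit the Gauss evaluation $\big|Q_p(\omega_p^k)\big|=\sqrt p$ recalled above. Sampling $\big|Q_p\big|^4$ at the $p$-th roots of unity and using $Q_p(1)=\sum_{k=1}^{p-1}\chi(k)=0$ gives $\frac1p\sum_{k=0}^{p-1}\big|Q_p(\omega_p^k)\big|^4=(p-1)p$. Since the Laurent polynomial $\big|Q_p\big|^4$ has exponents in $[-(2p-4),2p-4]$, aliasing at multiples of $p$ keeps only the exponents $0,\pm p$, so the sampled sum equals $a_0+2a_p$ (with $a_{-p}=a_p$), where $a_d$ is the coefficient of $z^d$ in $\big|Q_p\big|^4$. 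This produces the exact identity $\big\|Q_p\big\|_4^4=p(p-1)-2a_p$, with $a_p=\sum_d\gamma_d\gamma_{p-d}$. I would also record the consequence of the trivial periodic autocorrelation $\sum_{k\bmod p}\chi(k)\chi(k+d)=-1$, namely the symmetry $\gamma_d+\gamma_{p-d}=-1$; this already yields $\sum_{d=1}^{p-1}\gamma_d=-\frac{p-1}{2}$ but, crucially, says nothing about the second moment.

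The crux is therefore the sharp evaluation of $a_p$, equivalently of the off-peak second moment $\sum_{d=1}^{p-2}\gamma_d^2$ (the two are linked by $\gamma_d+\gamma_{p-d}=-1$, which gives $a_p=-\sum_d\gamma_d^2+O(p)$), and this is where the real work lies. Because every periodic correlation equals $-1$, the entire content of the theorem is the aperiodic wrap-around, whose leading constant must be extracted exactly rather than merely bounded. I would write each $\gamma_d=\sum_k\chi(k(k+d))$ as an incomplete quadratic character sum over a range, expand the range-indicator in additive characters, and evaluate the resulting twisted sums through Gauss sums; summing the squares over $d$ then produces the main term. The constant $\frac13$ surfaces as $\int_0^1(1-2t)^2\,dt$, reflecting that the local mean square of $\gamma_d$ near $d\approx tp$ behaves like $p(1-2t)^2$, so that $\sum_{d=1}^{p-2}\gamma_d^2\sim p^2\int_0^1(1-2t)^2\,dt=\frac{p^2}{3}$, whence $a_p\sim-\frac{p^2}{3}$ and $\big\|Q_p\big\|_4^4=p(p-1)-2a_p\sim\frac{5}{3}p^2$.

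The main obstacle is exactly this last step: pinning the \emph{exact} leading coefficient. Pólya–Vinogradov (or Weil) only give $|\gamma_d|=O(\sqrt p\,\log p)$, hence the crude bound $\sum_d\gamma_d^2=O(p^2\log^2 p)$, which carries both a spurious logarithmic factor and no access to the constant; obtaining the clean asymptotic $\frac13 p^2$ requires the delicate main-term analysis of the incomplete character sums carried out by H{\o}holdt and Jensen. Granting that evaluation, the reduction above closes the argument and gives $\big\|Q_p\big\|_4/\big\|Q_p\big\|_2\tend{p}{+\infty}\sqrt[4]{5/3}$.
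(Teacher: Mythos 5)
The paper offers no internal proof of this statement at all: it is imported verbatim as a theorem of H{\o}holdt and Jensen with the citation \cite{Hoholdt-Jensen}, so there is no argument of the paper's to compare yours against step by step. Your reduction skeleton is correct and verifiable. Parseval indeed gives $\|Q_p\|_4^4=\sum_d\gamma_d^2$; sampling $|Q_p|^4$ at the $p$-th roots of unity, using the Gauss evaluation \eqref{Gauss} together with $Q_p(1)=0$ and the aliasing of exponents $0,\pm p$, yields the exact identity $\|Q_p\|_4^4=p(p-1)-2a_p$ (a quick check at $p=5$: $\|Q_5\|_4^4=28=5\cdot 4-2a_5$ with $a_5=\sum_e\gamma_e\gamma_{5-e}=-4$), and $\gamma_d+\gamma_{p-d}=-1$ follows correctly from the periodic autocorrelation $\sum_{k\bmod p}\chi(k)\chi(k+d)=-1$. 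You also silently repair the paper's misprinted normalization: as displayed, $\|Q_p\|_4/p\to 0$; since $\|Q_p\|_2^2=p-1$, the intended statement is $\|Q_p\|_4/\|Q_p\|_2\to\sqrt[4]{5/3}$, i.e.\ $\|Q_p\|_4^4\sim\frac{5}{3}p^2$, which is exactly what you prove modulo the key asymptotic.

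The one substantive caveat is the one you yourself flag: the crux, $\sum_{d=1}^{p-2}\gamma_d^2\sim\frac{p^2}{3}$ (equivalently $a_p\sim-\frac{p^2}{3}$), is \emph{granted} rather than proved. Your sketch of how it goes --- expand each incomplete sum $\gamma_d=\sum_k\chi(k(k+d))$ in additive characters, evaluate the twisted complete sums via Gauss sums, and extract the local mean square $\approx p(1-2t)^2$ at $d\approx tp$, whose integral $\int_0^1(1-2t)^2\,dt=\frac13$ produces the constant --- is indeed the engine of the H{\o}holdt--Jensen computation, and your remark that P\'olya--Vinogradov or Weil bounds can only give $O(p^2\log^2 p)$ with no constant is exactly right. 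But since that main-term analysis is not carried out, your proposal amounts to a correct and illuminating reduction plus the same external citation the paper itself relies on: in substance it matches the paper's treatment (which is pure citation) while adding genuine, verifiably correct structure around it, without constituting an independent proof.
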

The exact formula of $\|Q_p\|_4$ in term of the number of class of $\mathbb{Q}(\sqrt{-p})$ was obtained by Browein and Choi \cite{Browein-Choi}. Very recently, G\"{u}nther and Schmidt in \cite{Gunther-Schmidt} established a formula for the limit of the $L^{2q}$-norm where $q$ is a positive integer.\\

We further needs some  classical and fundamental ingredient from the modern probabilistic literature. This ingredient seems to be useful in some problem related to the almost sure convergence.\\

\noindent{} Of course, it is obvious that almost everywhere convergence does not in general imply
convergence in $L^p(X)$. Nevertheless, it is well known that the condition of domination ensures such convergence (Lebesgue's Dominated Convergence Theorem) but in the absence of domination the following Vitali's convergence theorem allows us to obtain the convergence in $L^p(X)$ provided that the sequence is uniformly integrable.
\begin{Th}[Vitali's convergence theorem ] \label{Vitali}Let $(X,\mathcal{B},\mu)$ be a probability space, $p$ a positive number and $\{f_n\}$ a sequence in $L^p(X)$ which converges in probability to $f$. Then, the following are equivalent:
\begin{enumerate}[(i)]
\item $(|f_n|^p)_{n \geq 0}$ is uniformly integrable;
\item $\ds
\Big|\Big|f_n-f\Big|\Big|_p \tend{n}{+\infty}0.$
\item $\ds \int_X |f_n|^p d\mu \tend{n}{+\infty} \int_X |f|^p d\mu.$
\end{enumerate}
\end{Th}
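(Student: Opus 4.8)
The plan is to establish the three equivalences through the cyclic chain $(i)\Rightarrow(ii)\Rightarrow(iii)\Rightarrow(i)$. Throughout I would pass to the nonnegative functions $g_n=|f_n|^p$ and $g=|f|^p$, noting that since $t\mapsto|t|$ and $t\mapsto t^p$ are continuous, the hypothesis $f_n\to f$ in probability propagates to $g_n\to g$ in probability. I would first record two standard facts used repeatedly. Fact (a): convergence in probability forces a subsequence converging $\mu$-almost everywhere, so $f$ is measurable and, by Fatou's lemma along such a subsequence, $\int g\le\liminf_k\int g_{n_k}$. Fact (b): the bounded convergence theorem persists under convergence in probability, i.e. if $h_n\to h$ in probability and $|h_n|\le C$ then $\int h_n\to\int h$ (proof by the subsequence principle: every subsequence admits a further almost-everywhere convergent subsequence to which dominated convergence applies, and all limits agree). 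I would also isolate a core lemma: a nonnegative sequence $w_n\to0$ in probability with $(w_n)$ uniformly integrable satisfies $\int w_n\to0$.

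For $(i)\Rightarrow(ii)$ I would show $(|f_n-f|^p)_n$ is uniformly integrable and then apply the core lemma to $w_n=|f_n-f|^p$, which tends to $0$ in probability. Uniform integrability follows from the elementary bound $|f_n-f|^p\le C_p\big(|f_n|^p+|f|^p\big)$ with $C_p=\max(1,2^{p-1})$: the dominating family is uniformly integrable because $(|f_n|^p)$ is so by $(i)$, the singleton $\{|f|^p\}$ is trivially uniformly integrable (here $f\in L^p$ by fact (a)), sums of uniformly integrable families remain so, and domination transfers uniform integrability since $\{w_n>M\}\subseteq\{G_n>M\}$ whenever $w_n\le G_n$. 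The core lemma itself I would prove by truncation: given $\varepsilon>0$, pick $M$ with $\sup_n\int_{\{w_n>M\}}w_n<\varepsilon$, then estimate $\int w_n=\int_{\{w_n>M\}}w_n+\int_{\{w_n\le M\}}w_n\le\varepsilon+M\,\mu(\{w_n>\eta\})+\eta$, let $n\to\infty$ so that $\mu(\{w_n>\eta\})\to0$, and finally send $\eta\to0$ and $\varepsilon\to0$.

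The implication $(ii)\Rightarrow(iii)$ is elementary, in two regimes. For $p\ge1$ the reverse triangle inequality in $L^p$ gives $\big|\,\|f_n\|_p-\|f\|_p\,\big|\le\|f_n-f\|_p\to0$, and raising to the $p$-th power yields $\int|f_n|^p\to\int|f|^p$. For $0<p<1$ the pointwise inequality $\big||a|^p-|b|^p\big|\le|a-b|^p$ gives directly $\big|\int|f_n|^p-\int|f|^p\big|\le\int|f_n-f|^p\to0$.

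The delicate direction, which I expect to be the main obstacle, is $(iii)\Rightarrow(i)$: extracting uniform integrability of $(g_n)$ from the single scalar convergence $\int g_n\to\int g$ (with $g\in L^1$, its integral being the finite limit in $(iii)$). The key objects are the truncated functionals $h_n(A)=\int(g_n-A)^+\,d\mu=\int g_n-\int(g_n\wedge A)$; using $(iii)$ together with fact (b) applied to the bounded sequence $g_n\wedge A\to g\wedge A$, one gets $h_n(A)\to h(A):=\int(g-A)^+$ for each fixed $A$. The subtlety is upgrading this pointwise-in-$n$ convergence to a bound uniform in $n$: fixing $\varepsilon>0$, I would choose $A_0$ with $h(A_0)<\varepsilon$ (possible since $\int(g-A)^+\to0$ as $A\to\infty$), invoke $h_n(A_0)\to h(A_0)$ to control all but finitely many $n$, and dispatch the remaining finitely many integrable $g_n$ using $h_n(A)\to0$ as $A\to\infty$ for each fixed $n$; monotonicity of $A\mapsto h_n(A)$ then delivers $\sup_n h_n(A)\to0$ as $A\to\infty$. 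The elementary inequality $t\,\1_{\{t>2A\}}\le2(t-A)^+$ finally gives $\sup_n\int_{\{g_n>2A\}}g_n\le2\sup_n h_n(A)\to0$, which is precisely the uniform integrability of $(g_n)$. Threading this uniform-in-$n$ estimate through the finitely-many-exceptional-terms argument is the crux of the proof.
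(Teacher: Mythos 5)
The paper does not actually prove this theorem: it states it and defers to Chung \cite[pp.101]{Chung}, so there is no in-paper argument to compare against. Your blind proof is correct and complete, and it is essentially the classical textbook argument (the one found in Chung): the $(i)\Rightarrow(ii)$ step via the $C_p$-inequality $|f_n-f|^p\le \max(1,2^{p-1})(|f_n|^p+|f|^p)$ together with the lemma that a uniformly integrable sequence tending to $0$ in probability tends to $0$ in $L^1$; the elementary $(ii)\Rightarrow(iii)$ split into the regimes $p\ge 1$ (reverse triangle inequality) and $0<p<1$ (subadditivity of $t\mapsto t^p$); and the genuinely delicate $(iii)\Rightarrow(i)$ via the truncation functionals $h_n(A)=\int (g_n-A)^+\,d\mu$, the bounded-convergence-in-probability fact applied to $g_n\wedge A$, the finitely-many-exceptional-indices argument exploiting monotonicity of $A\mapsto h_n(A)$, and the inequality $t\,\1_{\{t>2A\}}\le 2(t-A)^+$. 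All the auxiliary claims you invoke (UI passes to sums, to dominated sequences, and singletons of integrable functions are UI; $f\in L^p$ via Fatou along an a.e.\ subsequence plus the $L^1$-boundedness that UI gives on a probability space) are accurately stated and correctly deployed. One point worth making explicit, which you rightly flag: assertion $(iii)$ must be read with $\int|f|^p$ \emph{finite} (Chung states it as $\int|f_n|^p\to\int|f|^p<\infty$); otherwise $(iii)$ could hold with both sides tending to $+\infty$ while $(i)$ and $(ii)$ fail, and your use of dominated convergence to get $h(A)\to 0$ as $A\to\infty$ would break down. With that standard reading, your proof is a valid substitute for the cited reference.
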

\noindent{}For the proof of Theorem \ref{Vitali} we refer the reader to \cite[pp.101]{Chung}.\\

\noindent{}We remind that the condition
\begin{eqnarray*}\label{uniform}
\sup_{ n \in \N}\Big(\int_X\big|f_n\big|^{1+\varepsilon}d\mu\Big) < +\infty,
\end{eqnarray*}
for some $\varepsilon$ positive, implies that $\{f_n\}$ are uniformly integrable.\\

The third ingredient we need is the Marcinkiewicz-Zygmund inequality \cite[ Vol. II, p. 30]{Zygmund}. The Marcinkiewicz-Zygmund inequality asserts that for any $\alpha \in ]1,+\infty[$ there are two positive constants $A(\alpha), B(\alpha)$ such that
\begin{eqnarray}\label{MZ}
\frac{A_{\alpha}}{n}\sum_{j=0}^{n-1}\big|P(e^{2\pi i\frac{j}{n}})\big|^{\alpha}
\leq \int_{\T}\Big|P(z)\Big|^{\alpha} dz \leq \frac{B_{\alpha}}{n}\sum_{j=0}^{n-1}\big|P(e^{2\pi i\frac{j}{n}})\big|^{\alpha},
\end{eqnarray}
for any polynomial $P$ of degree at most $n-1$.\\

We are now ready to prove Theorem \ref{mainF}.
\begin{proof}[\textbf{Proof of Theorem \ref{mainF}}]We start by noting that for any $\alpha \geq 0$, there is a constant $c_\alpha$ such that,
$$\|Q_p\|_\alpha \leq c_\alpha,$$
for any prime number $p$. This is an easy application of the Marcinkiewicz-Zygmund inequality \eqref{MZ} combined with Gauss formula \eqref{Gauss}. Assume that for some $\alpha>0$, $Q_p$ is $L^\alpha$-flat, Then
$$\Big\| \frac{\big|Q_p(z)\big|}{\sqrt{p}}-1\Big\|_\alpha \tend{p}{+\infty}0.$$
This gives that there is a subsequence $(p_n)$ for which the sequence $\Big(\frac{|Q_{p}|}{\sqrt{p}}\Big)$ converges almost everywhere to $1$. But $\Big(\frac{|Q_{p_n}|^4}{p_n}\Big)$ is uniformly integrable, hence
$$\Big\|\frac{Q_{p_n}}{\sqrt{p_n}}\Big\|_4 \tend{n}{+\infty}1$$
by Vitali's convergence theorem \ref{Vitali}. This contradicts Theorem \ref{4-norm}. We thus get that for any $\alpha \geq 0,$
 the Fekete polynomials are not $L^\alpha$-flat as in the proof of Theorem \ref{main}.
\end{proof}
\begin{rem}Of course, Theorem \ref{mainF} is valid for the modified Fekete polynomials and the shifted Fekete polynomials defined respectively by
$$F_p(z)=1+Q_p(z), \textrm{~~for~~any~~} z \in S^1,$$
and
$$F_p^t(z)=\sum_{k=0}^{q-1}\left(\frac{k+t}{p}\right)z^k.$$
\end{rem}
\begin{que}Our work suggests the following natural question.
Let $p$ be a prime number and  $q = p^2 + p + 1$. Let $S \subset \Z/q\Z$ be a Singer set and put
$T = S + S$. By the nice combinatorial properties of Singer sets we know that if $x = a+b$
then this representation is essentially unique: there is one representation if
$a = b$ and there are two if a $a \neq b$ , namely $x = a + b = b + a$. Whence
$$\big|T\big| =\frac12\big(p^2+3p+2),$$
where, as is customary, $\big|T\big|$ denotes  the number of elements in the set $T$.  Define the sequence $(\epsilon_{j}^{(q)})_{j=0}^{q-1}$ by
$$\epsilon_{j}^{(q)}=\left\{
    \begin{array}{ll}
      1, & \hbox{if $j \in T$;} \\
      -1, & \hbox{if not.}
    \end{array}
  \right.
$$
For any $z \in S^1$, put
$$P_q(z)=\frac1{\sqrt{q}}\sum_{j=0}^{q-1}\epsilon_{j}^{(q)} z^j.$$
Can one prove or disprove that $(P_q)$ is $L^\alpha$-flat, for $\alpha>0$.
\end{que}

  \end{document}